\documentclass[onecolumn,11pt,journal]{article}
\usepackage{cite,newalg,epsfig}
\usepackage{subfigure,amsmath,times,amssymb,psfrag,graphicx,color,setspace}
\usepackage{multirow}

\usepackage{epsfig}
\usepackage{latexsym}
\usepackage{url}
\usepackage[hypertexnames=false]{hyperref}
\usepackage{ifthen}
\usepackage{subfigure}\usepackage{rotating}



\def\blfootnote{\xdef\@thefnmark{}\@footnotetext}
\long\def\symbolfootnote[#1]#2{\begingroup%
\def\thefootnote{\fnsymbol{footnote}}\footnote[#1]{#2}\endgroup}

\newcommand{\Sscr}{{\cal S}}

\newtheorem{theorem}{Theorem}
\newtheorem{proposition}{Proposition}

\newtheorem{lemma}{Lemma}
\newtheorem{assumption}{Assumption}

\newtheorem{example}{Example}

\newenvironment{proof}{\noindent {\bf Proof: }}{\hfill \qed \par}

\DeclareMathOperator{\argmax}{argmax}

\newcommand{\eps}{{\epsilon}}
\newcommand{\Ascr}{{\mathcal A}}

\newcommand{\Nscr}{{\mathcal N}}
\newcommand{\Jscr}{{\mathcal J}}

\newcommand{\1}{{\bf 1}}
\newcommand{\qed}{ $\blacksquare$}

\usepackage{setspace}
\setstretch{1.2}

        \oddsidemargin  0.0in
        \evensidemargin 0.0in
        \textwidth      6.5in
        \headheight     -0.3in
        \topmargin       -0.05in
        \textheight=9.0in


\begin{document}
\thispagestyle{empty}
\setcounter{page}{0}
\title{Efficiency-Loss of Greedy Schedules in Non-Preemptive Processing of Jobs with Decaying Value}

\author{\begin{tabular}[t]{c@{\extracolsep{8em}}c}
Carri W. Chan  & Nicholas Bambos  \\
         & Electrical Engineering Department and \\ 
        Electrical Engineering Department& Management Science \& Engineering Department \\
        Stanford University & Stanford University \\
        Stanford, CA 94305 & Stanford, CA 94305 \\
        cwchan@stanford.edu & bambos@stanford.edu
\end{tabular}}
\date{}
\maketitle
\thispagestyle{empty}
\pagebreak

\pagenumbering{arabic}
\doublespacing
\begin{center}
{\LARGE Efficiency-Loss of Greedy Schedules in Non-Preemptive Processing of Jobs with Decaying Value}
\end{center}

\begin{abstract}

We consider the problem of dynamically scheduling $J$ jobs on $N$ processors for {\em non-preemptive} execution where the value of each job (or the reward garnered upon completion) {\em decays} over time. All jobs are initially available in a buffer and the distribution of their service times are known. When a processor becomes available, one must determine which free job to schedule so as to maximize the total expected reward accrued for the completion of all jobs. Such problems arise in diverse application areas, e.g. scheduling of patients for medical procedures, supply chains of perishable goods, packet scheduling for delay-sensitive communication network traffic, etc. Computation of optimal schedules is generally intractable, while online low-complexity schedules are often essential in practice.

It is shown that the simple greedy/myopic schedule provably achieves performance within a factor $2+\frac{ E[\max_j\sigma_j]}{\min_j E[\sigma_j]}$ from optimal. This bound can be improved to a factor of $2$ when the service times are identically distributed.  Various aspects of the greedy schedule are examined and it is  demonstrated to perform quite close to optimal in some practical situations despite the fact that it ignores reward-decay deeper in time.
\end{abstract}

\section{Introduction}

Consider a queueing/scheduling system (as in Fig. \ref{fig:sys}), where a finite number $J$ jobs wait in a buffer, each to be processed by one of $N$ servers/processors.  Time is slotted. The service/processing requirement, $\sigma_j$, of each job $j$ is random and its distribution, $f_j(\sigma_j)$, is known. All processors operate at service rate $1$; hence, the service time for each job is invariant to the processor which it assigned. Service is {\em non-preemptive} (job service cannot be interrupted mid-processing to be resumed later or discontinued). The completion of job $j$ in time slot $t$ garners a reward $w_j(t)\geq 0$, which decays with time (i.e. $w_j(t)$ is non-increasing in $t$).  The goal is to schedule the jobs on the processors so as to maximize the aggregate reward accrued when all jobs complete execution.

As will become clear below, a key complicating factor is that the job service is {\em non-preemptive}, inducing a `combinatorial twist' on the problem. Under preemptive processing, the latter would wash away and the problem would become much simpler. Another complicating factor is the fact that the rewards/values $w_j(t)$ decay over time in a general way; special cases might be significantly easier to handle (though still not necessarily easy).
A third complicating factor is the general distributions of the stochastic job processing times $\sigma_j$ (even though these are independent across different jobs); for special distributions the problem can become significantly simpler (and the results tighter). We aim to address the problem in the most general setting arising in a variety of applications (see below), which may actually require online (real-time) schedule implementation. In that case, since the complexity of computing the optimal job schedule is prohibitive, one seeks simple and practical schedules (implementable online), which have performance within provable bounds from optimal. In this paper, we focus on a \emph{greedy/myopic} schedule defined below and study its efficiency. We discuss these factors below in conjunction with prior work and a variety of applications.

\begin{figure}
\begin{center}
\psfrag{jobs}{jobs $ =
\{1,2,\dots,j,\dots,J-1,J\}$}\psfrag{m1j}{$\sigma_j$}\psfrag{m2j}{$\sigma_j$}\psfrag{mn1j}{$\sigma_j$}\psfrag{mnj}{$\sigma_j$}
\includegraphics[scale=.8]{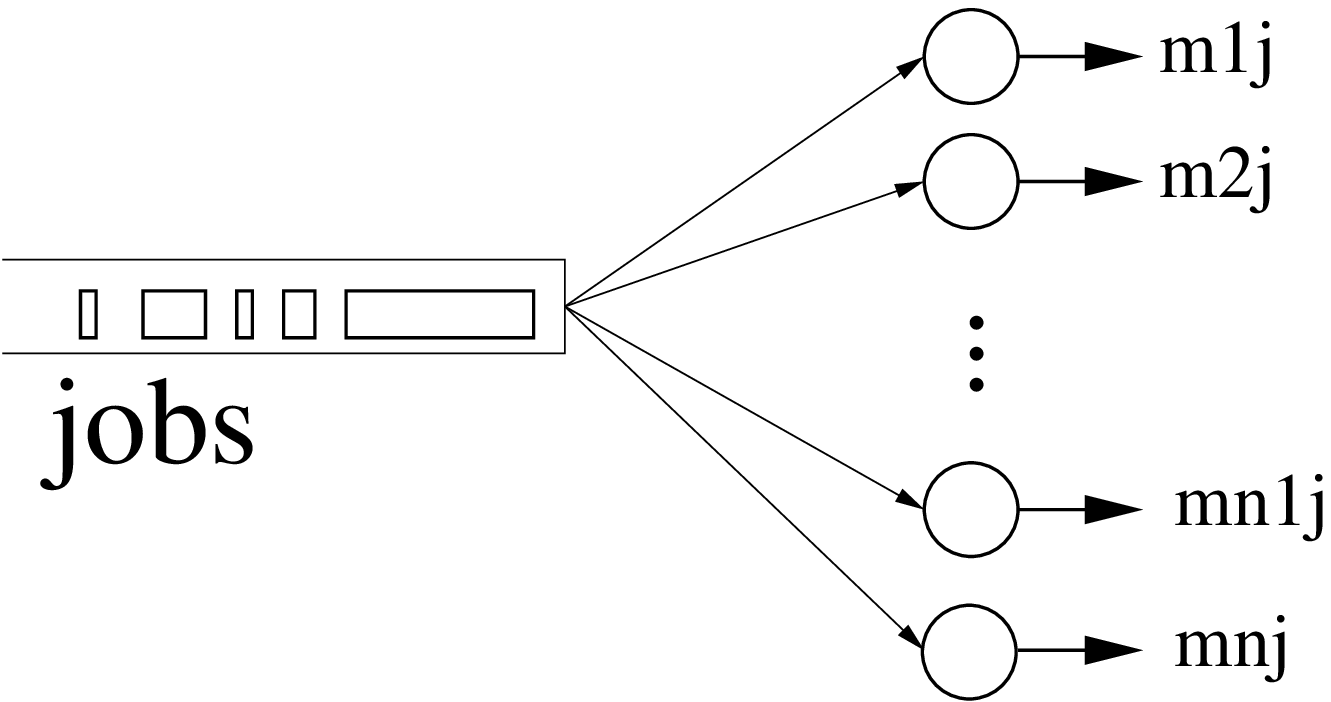}
\caption{System Diagram: $J$ jobs wait to be processed on one of $n$ machines.  The processing time of each job is independent of the processor and other jobs.}\label{fig:sys}
\end{center}
\end{figure}

\subsection{Applications}

There are diverse applications where job completion rewards decay over time.  For example, such is the case with patient scheduling in health-care systems. Delays in treatment often lead to deterioration of patient health (see, for instance,\cite{mcquillan}) which may result in reduction of the eventual treatment impact; this is obviously the case with various medical procedures, operations, etc. Indeed, a number of studies have demonstrated that delayed treatment results in increased patient mortality \cite{chan,luca,buist,bellomo,sharek_jama}. Moreover, in a related study \cite{poon_arch_internmed}, over 60\% of physicians reported dissatisfaction with delays in viewing test results, which subsequently led to delays in treatment. It is likely that increased mortality is primarily induced via deterioration of patient health condition and resulting reduction of benefit from eventual treatment.  This is how the effect of treatment delay is modeled in this paper.

On the other hand, in information technology, reward decay occurs in various situations--for example, in multimedia packet scheduling for transmission over wireless links. Each packet corresponds to a job which is completed once the packet is successfully received at the receiver; until then, it is repeatedly transmitted (non-preemptive processing). Transmission time until successful reception is random, due both to random packet sizes and randomly varying wireless channel quality. In the simplest case, video packets have a single deadline and reward is only received if the packet is received prior to its deadline expiration.  In more advanced schemes, multiple deadlines are considered (decreasing, piecewise-constant reward decay function), reflecting coding interdependencies across packets. Indeed, even if a packet misses its initial deadline, it could improve the quality of the received and reconstructed video because other packets which depend on it may still be able to meet their deadline \cite{wee_icme02}.

As with multimedia packet scheduling above and similar situations of task scheduling in parallel computing systems, we can consider jobs that contain interdependencies within our model.  The completion of a single job $j$ garners reward $r_j$.  However, other jobs  may rely on that one too, either because they cannot begin processing until that is completed (due to data-passing, precedence constraints, etc.) or their processing accuracy/quality depends on output from that job (e.g. decoding dependencies). Therefore,  the `effective' reward generated is actually $w_j(t) = r_j - f(t)$, where the increasing function $f(t)$ reflects the detrimental effect that completing job $j$ after delay $t$ has on other jobs depending on it.  In fact, our formulation allows for the case where even $r_j$ is a decaying function in time.

A third application area where job completion rewards may decay over time is in the case of perishable items, like food, medicine, etc.  For example, the quality of food items (milk, eggs, etc.) decays with time.  The scheduling problem is when to release these items for sale given varying transportation times (from storage to shelf) and the decaying reward $R(t)$.  It is also possible to have a cost $s$ for each time slot the item remains in storage so that the effective reward of an item once it is released for sale is $C(t) = R(t) - st$.

\subsection{Literature Review}

When rewards do not decay over time but stay constant, job scheduling problems may be cast in the framework of `multiarmed bandit' problems \cite{gittins, walrand}.  Furthermore, optimal policies for certain `well-behaved' decaying reward functions (such as linear and exponential) have been developed (see \cite{gittins, walrand} and related works).  Unfortunately, under general decaying rewards, solving for the optimal schedule becomes very difficult.

There has been related work on delay-sensitive scheduling in networking.  In the case of broadcast scheduling in computer networks, jobs correspond to requests for pages (files).  Due to the broadcast nature of a wireless channel, multiple requests can be satisfied with the transmission of a single page. In \cite{kim04}, a greedy algorithm is shown to be a $2$-approximation for throughput maximization of broadcast scheduling in the case of {\em equal sized pages}.  In a similar scenario, an online {\em preemptive} algorithm is shown to be $\Omega(\sqrt{n})$ competitive where $n$ is the number of pages that can be requested \cite{lipton94}. Our work differs from this prior work in that we allow for 1) arbitrary decaying rewards, rather restricting to step functions when the deadline expires, 2) jobs are non-preemptive and have varied lengths (and all jobs are available at time $0$).

A substantial body of work has focused on scheduling for perishable products (see \cite{nahmias82} for a review).  The focus is on finding an optimal ordering policy given the lifetime and demand of the perishable items.  In \cite{naso07}, the authors study how to maximize utility garnered by delivering perishable goods, such as ready-mixed concrete, and minimize costs subject to stochasticity in transportation times.  The authors formulate a mathematical program to solve the problem and propose heuristic algorithms for use in practice.  Interestingly, the perishable items in this case have a fixed lifetime, after which they are rendered useless (deadline).  Our formulation here allows for general decay.

In \cite{dalal05}, the authors look at how to schedule an M/M/1 queue where rewards decay exponentially dependent on each job's sojourn time due to the `impatient' nature of the users.  A greedy policy is shown to be optimal in the case of identical decay rates of these impatient users.  Our scheduling problem is closely related to a number of instances of the Multiarmed Bandit Problem.  When rewards exhibit `well-behaved' decay, (identical rates, constant rates, etc.) it is possible to find optimal, or near-optimal algorithms \cite{pinedo,schuurman99,walrand,gittins}.  This is not always the case for arbitrary decay.

In a problem similar to the one we study in this paper, a greedy algorithm is shown to be a $2$-approximation when job completions generate rewards according to general decaying reward functions \cite{chan08}.  The main distinction between this work and ours is that the previous work allows for job preemption while we consider the case that once a job is scheduled it occupies the machine until it completes.  This constraint adds an extra layer of complexity.

Indeed, non-preemption makes the scheduling problem we study substantially more difficult.  Non-preemptive interval scheduling is studied in \cite{goldman00,lipton94} among others.  Jobs can either be scheduled during their specified interval or rejected.  The end of the interval corresponds to the deadline of the corresponding job.  If $\Delta$ is the ratio of the large job size to the smallest job size, then an online algorithm cannot be better than $O(\log \Delta)$.  Our work differs from this prior work because we consider arbitrary decay of rewards and assume all jobs are available at time $0$. The decaying reward functions make this a more general and difficult scheduling problem.
However, our result also relies on $\Delta$, the ratio between largest and smallest jobs.

Still, there are instances where optimal schedules can be found for arbitrary decaying rewards. In a parallel scenario to ours, jobs can be scheduled, non-preemptively,  multiple times.  For this problem, the reward function for completing a particular job decays with the number of times that job has been completed. In this case, a greedy policy is optimal for arbitrary decaying rewards \cite{walrand}.  This problem is parallel to ours in that it allows for arbitrary decaying rewards.  However, the decay does not depend on the completion time of the job, but rather on the number of times that job has been completed.  In our case, each job is only processed a single time.

Relating back to our scenario where the rewards decay with time, it is again the case that for `well-behaved' decaying functions (linear and exponential), policies based on an index rule are optimal \cite{gittins,walrand}.  The policy we propose in this paper is also an index rule.  In fact, the proposed policy  is very closely related to the `c-$\mu$'-type scheduling rules (see, for instance \cite{VanMieghem,walrand}) where the objective is to minimize cost (rather than maximize rewards) when costs are linearly or concavely increasing.  One of the main distinctions between our work and this is that we consider multiple servers.  Unfortunately, the optimality of the `c-$\mu$' rule does not extend to this case.  Furthermore, linear/concave decaying rewards are just single instances of our more general formulation of decaying rewards.  It is also important to recognize that many of the results of this prior work are in heavy-traffic regimes where a lot of the fine-grained optimization required in non-heavy-traffic is washed out.

\subsection{Summary of Results}

In this paper, we study the efficacy of a greedy scheduling algorithm for non-preemptive jobs whose rewards decay arbitrarily with time.  There are a number of applications which exhibit such behavior such as patient scheduling in hospitals, packet scheduling in multimedia communication systems, and supply chain management for perishable goods.  It is shown that finding an optimal scheduling policy for such systems is NP-hard.  As such, finding simple heuristics is highly desirable.  We show that a greedy algorithm is guaranteed to be within a factor of $\Delta+2$ of optimal where $\Delta$ is the ratio of the largest job completion time to the smallest.  This bound is improved in some special cases.  Via numerical studies, we see that, in practice, the greedy policy is likely to perform much closer to optimal which suggests it is a reasonable heuristic for practical deployment.  To the best of our knowledge this is the first look at non-preemptive scheduling of jobs with arbitrary decaying rewards.

The rest of the paper is structured as follows.  In Section \ref{sec:model} we formally introduce the scheduling model we will study.  In Section \ref{sec:greedy} we propose and study the performance of a greedy scheduling policy. The main result, which is a bound on the loss of efficiency due to greedy scheduling, is given in Section \ref{ssec:bound}.  In Section \ref{sec:special}, we examine some special cases where this bound can be improved.  In Section \ref{sec:perfeval}, we do some performance evaluation of the greedy policy via a simulation study.  Finally, we conclude in Section \ref{sec:conc}.

\section{Model Formulation}\label{sec:model}

Consider a set of $J$ jobs, indexed by $j \in \Jscr =\{1,2,\dots,J\}$, and $N$ processors/servers, indexed by $n \in \Nscr = \{1,2,\dots, N\}$.  Each job $j\in \Jscr$ has a random processing requirement $\sigma_j$ and can be processed by any processor $n\in \Nscr$. All processors have service rate 1 and each one can process a single job at a time. Service is {\em non-preemptive} in the sense that once a processor starts executing a job it cannot stop until completion. Time is slotted and indexed by $t\in\{1,2,3,...\}$.  We denote the distribution of the service times by $f_j(\sigma_j)$.
\begin{assumption}
\label{as:sigma}
The random job processing times $\sigma_j,j\in\cal J$ are 1) statistically independent with $P(\sigma_j < \infty) = 1$ and 2) their distributions, $f_j(\sigma_j)$, do not depend on time.
\end{assumption}
However, the jobs processing times are {\em not} necessarily identically distributed.

Let $b_j(t)$ be the residual service time of job $j$ at time $t$. Initially, $b_j(0)=\sigma_j$, for each $j\in\cal J$. The backlog state of the system at time $t$ is the vector
\begin{equation}
{\bf b}(t)= \left(b_1(t), b_2(t),..., b_j(t), ..., b_J(t)\right).
\end{equation}
It evolves from initial state ${\bf b}(0)= (\sigma_1,\sigma_2,...,\sigma_j,...,\sigma_J)$ to final state $\vec \sigma(T)=(0,0,...,0,...,0)$ by assigning processors to process the jobs \emph{non-preemptively}, until all jobs have finished execution at some (random) time $T$.
Note that for each job $j\in\cal J$, $b_j(t)=\sigma_j$ implies that $j$ has not started processing by $t$ (has not been scheduled before $t$), while $b_j(t)=0$ implies that the job finished execution before (or at) time $t$. Indeed, if job $j$ starts execution at time slot $t_j$ and finishes at the beginning of time slot $T_j$ then $\sigma_j=T_j-t_j$ and
\begin{equation}
b_j(t) = \left\{
\begin{array}{lll}
\sigma_j, & t < t_j \\
\sigma_j-[t-t_j], & t = t_j, t_j+1, ..., T_j-1\\
0, & t \geq T_j
\end{array} \right.
\end{equation}
As discussed later, the start times $t_j$ are chosen by the scheduling policy, while the end times $T_j$ are then determined by the fact that scheduling is non-preemptive so that  $T_j=t_j+ \sigma_j$.

The job service times are random and their true values are not observable ex ante or known a priori; they can only be seen ex post, after a job has completed processing. However, the values $x_j(t)$ tracking which jobs are completed at each time $t$
\begin{equation}
x_j(t) = \left\{
\begin{array}{ll}
1, & \mbox{if job $j$ has not completed processing at time slot $t$} \\
0, & \mbox{if job $j$ has completed by time slot $t$}
\end{array}
\right.
\end{equation}
are directly observable for each job $j\in\cal J$. We work below with the {\em observable} `backlog state'
\begin{equation}
{\bf x}(t)=\left(x_1(t),x_2(t),...,x_j(t),...,x_J(t)\right)
\end{equation}
in $\{0,1\}^J$ which tracks which jobs are completed and which are still waiting to complete processing at time $t$.

To fully specify the state of a job, we define $y_j(t)$ as the time slot $t' < t$ in which job $j$ begins processing. Specifically,
\begin{equation}
y_j(t) = \left\{
\begin{array}{ll}
t', & \mbox{if job $j$ began procesing in time slot $t' < t$} \\
\emptyset, & \mbox{if job $j$ has not begun processing prior to time slot $t$ (necessarily, $x_j(t) = 1$)}
\end{array}
\right.
\end{equation}
Hence, any job with $y_j(t) = \emptyset$ (where $\emptyset$ is some null symbol)  has not yet begun processing and is free to be scheduled.   If $x_j(t) = 1$, then job $j$ has not completed and it is still being processed due to the non-preemptive nature of the service discipline.  Once a job is scheduled in time slot $t_j$, then $y_j(t) = t_j$ for all $t > t_j$.  The service state is then,
\begin{equation}
{\bf y}(t)=(y_1(t),y_2(t),...,y_j(t),...,y_J(t))
\end{equation}
in $\{\{0,1,...,t-1\} \cup \emptyset \}^J$ and tracks when (and if) each job began processing.   In time slot $t$, one can calculate the distribution for the remaining service time $b_j(t)$ given the distribution of $\sigma_j$ based on when (if) the job has started processing and whether it has completed.  Only the distribution of $b_j(t)$ is known as the job service time is only observable once the job completes processing. Therefore,  $x_j$ and $y_j$ can be jointly leveraged to compute the distribution of the residual service time of job $j$.

We next define the state $z_n(t)$ of processor $n\in\cal N$ which tracks which job it is assigned to process in time slot $t$. Specifically,
\begin{equation}
z_n(t)= \left\{
\begin{array}{ll}
j, & \mbox{if processor $n$ is still executing job $j\in\cal J$ at the beginning of time slot $t$} \\
0, & \mbox{if processor $n$ is free at the beginning of time slot $t$, hence, available for allocation}
\end{array}
\right.
\end{equation}
and the processor state is
\begin{equation}
{\bf z}(t)=(z_1(t),z_2(t),...,z_n(t),...,z_N(t))
\end{equation}
in $\{0,1,...,j,...J\}^N$ and tracks the free vs. allocated processors at the beginning of time slot $t$.

At the beginning of each time slot $t$, each job $j$ with $y_j(t) = \emptyset$ (not yet started) can be scheduled on (matched with) a processor $n$ with $z_n(t) = 0$ (free) to start execution. The {\em observable} state of the system at the beginning of time slot $t$ is
\begin{equation}\label{eq:state}
s_t = \left( {\bf x}(t),{\bf y}(t),{\bf z}(t)  \right)
\end{equation}
Recall that from ${\bf x}(t)$ and ${\bf y}(t)$ we can determine the distribution of the remaining service time ${\bf b}(t)$.  So the global state (\ref{eq:state}) yields the distribution for the remaining backlog and also tracks the processor state.
The state space $\Sscr$ is the set of all states the system may attain throughout its evolution.  We denote by $\mathbf{x}(s)$ the projection of the state onto the $\mathbf{x}$-coordinate.  We similarly apply notation for $\mathbf{y}(s)$ and $\mathbf{z}(s)$.

Given the free jobs and processors at state $s_t$, we denote by $\Ascr(s_t)$ the set of job-processor matchings (schedules) that can be selected, i.e. they are feasible, at the beginning of time slot $t$. These matchings are in addition to those already in place for jobs which are in mid-processing due to the non-preemptive nature of execution. Note that at each time $t$, for any feasible job-processor matching $A \in \Ascr(s_t)$ we have that $(j,n) \in A$ implies $x_j (t)= 1$, $y_j(t) = \emptyset$ and $z_n(t) = 0$, meaning processor $n$ is free and job $j$ has not started processing.  Also, only one free job can be matched to each free processor and vice-versa (hence, $(j,n), (k,m) \in A$ with $(j,n) \not = (k,m)$ implies $j \not = k$ and $n \not = m$).  Despite the fact that $\Ascr(s_t)$ is clearly a function of $s_t$, we may occasionally suppress $s_t$ for notational simplicity.

The completion of job $j$ by the end of time slot $t$ garners non-negative reward $w_j(t)$. We assume the reward decays over time, as follows.
\begin{assumption}
\label{as:reward}
For each job $j\in\cal J$, the reward function  $w_j(t) \geq 0$ decays  over time; that is, it is non-increasing in $t$ (may be piece-wise constant).
\end{assumption}
This immediately accounts for raw deadlines by setting $w_j(t)={\bf 1}_{ \{t\leq d_j\} }$ when $d_j$ is the deadline of job $j$.

Recall that if job $j$ is scheduled on processor $n$ at the beginning of time slot $t$, it will finish by the beginning of time slot $t+\sigma_j$. Therefore, the reward `locked' at the beginning of time slot $t$, given that a job-processor match $A\in\Ascr(s_t)$ is chosen to be used in this slot, is simply
\begin{equation}
\begin{split}
R_t(s,A) = \sum_{(j,n) \in A}w_j(t + \sigma_j)
\end{split}
\end{equation}

It is desirable to design a control (scheduling, matching) policy choosing at each $t$ a job-processor matching in $\Ascr(s_t)$ to maximize the total expected reward accrued until all jobs have been executed. Since at time $t$ the realization of $\sigma_j$  is unknown for each job $j$ that has not completed by $t$, \emph{any} control policy is a-priori unaware of the exact reward accrued from a particular action at $t$. Only the statistics of this reward are known. Specifically, let $\pi$ be a scheduling policy which chooses a job-processor matching $\pi_t(s_t)\in\Ascr(s_t)$ at $t$, and let $\Pi$ be the set of all such policies.  Define the expected total reward-to-go under a policy $\pi$ starting at state $s \in\Sscr$ at time slot $t$, as
\begin{equation}
\begin{split}
J^\pi_t(s) = E\left[\sum_{t'=t}^T R_{t'}(s_{t'},\pi_{t'}(s_{t'})) | s_t = s \right]
\end{split}
\end{equation}
where $T$ is the (random) time where all jobs have completed execution. $T$ may depend on the policy $\pi$ used.  Note that if we wanted to consider a finite, deterministic horizon $\tilde{T}$, we could appropriately generate a schedule based on the modified, truncated reward functions, $\tilde{w}_j(t)$, such that for all $t \leq \tilde{T}$, $\tilde{w}_j(t) = w_j(t)$, otherwise $\tilde{w}_j(t) = 0$.  The expectation is taken over the random service times $\sigma_j$ of the jobs. We let
\begin{equation}
J_t^*(s) = \max_{\pi \in \Pi} J_t^{\pi}(s)
\end{equation}
denote the expected total reward-to-go under the optimal policy, $\pi^* = \argmax_{\pi \in \Pi} J_t^{\pi}(s)$.

The optimal reward-to-go function (or \emph{value} function) $J^*$ and the optimal scheduling policy $\pi^*$ can in principle be computed via dynamic programming.  Once all jobs have been completed, ${\bf x} = \mathbf{0}$ and   no more reward can be earned.  Therefore, $J_t^*(s) = 0$ for all $s=({\bf x}, {\bf y},{\bf z})$ such that ${\bf x} = \mathbf{0}$.

Given the current state $s_t=s$ and the matching $A$ between free jobs and processors enabled at the beginning of time slot $t$, the system will transition to state $s_{t+1}=s'$ at the beginning of time slot $t+1$ with probabilities $P_A(s_{t+1}=s'|s_t=s)$. For example, if the service times $\sigma_j$ are geometrically distributed with probabilities $p_j$ correspondingly, and the system is in state $s_t=s=({\bf x}, {\bf y},{\bf z})$ and matching $A\in\Ascr(s_t)$ is chosen, then
the system transitions to state $s_t=s'=({\bf x}',{\bf y}',{\bf z}')$ with the following probabilities:
\begin{eqnarray}
P_A(x'_j = 0|s) &=& \left\{
                    \begin{array}{ll}
                      1, & \hbox{if $x(s)_j = 0$;} \\
                      p_j, & \hbox{if $y_j(s) < t$ or $(j,n) \in A$ for some $n$;} \\
                      0, & \hbox{otherwise.}
                    \end{array}
                  \right. \nonumber \\
P_A(x'_j = 1|s) &=& \left\{
                    \begin{array}{ll}
                      1, & \hbox{if $x(s)_j = 1$ and $(j,n) \not \in A$ for all $n$;} \\
                      1-p_j, & \hbox{if $y_j(s) < t$ or $(j,n) \in A$ for some $n$;} \\
                      0, & \hbox{otherwise.}
                    \end{array}
                  \right.\nonumber \\
P_A(y'_j = t|s) &=& \left\{
                    \begin{array}{ll}
                      1, & \hbox{if $(j,n) \in A$ for some $n$;} \\
                      0, & \hbox{otherwise.}
                    \end{array}
                  \right. \nonumber \\
P_A(y'_j = y_j|s) &=& \left\{
                    \begin{array}{ll}
                      1, & \hbox{$(j,n) \not \in A$ for all $n$;} \\
                      0, & \hbox{otherwise.}
                    \end{array}
                  \right.\nonumber \\
P_A(z'_n = j|s) &=& \left\{
                    \begin{array}{ll}
                      1-p_j, & \hbox{if $z(s)_n = j$ or $(j,n) \in A$ for some $n$;} \\
                      0, & \hbox{otherwise.}
                    \end{array}
                  \right. \nonumber \\
P_A(z'_n = 0|s) &=& \left\{
                    \begin{array}{ll}
                      p_j, & \hbox{if $z(s)_n = j$ or $(j,n) \in A$ for some $n$;} \\
                      1, & \hbox{if $z(s)_n = 0$ and $(j,n) \not \in A$ for all $j$;} \\
                      0, & \hbox{otherwise.}
                    \end{array}
                  \right.
\end{eqnarray}

We can now recursively obtain $J^*$  using the Bellman recursion
\begin{eqnarray}\label{eqn:DP}
J_t^*(s) &= &\max_{A \in \Ascr} \Big \{  E \big[\sum_{(j,n)\in A} w_j(t+\sigma_j) + \sum_{s'\in\cal S} P_A[s_{t+1}=s'|s_t=s] J_{t+1}^*(s') \Big \} \nonumber \\
&= &\max_{A \in \Ascr}\Big \{E[R_t(s,A)] + J_{t+1}^*(\tilde{S}(s,A)) \big]\Big \}
\end{eqnarray}
where $\tilde{S}(s,A)$ is the random next state encountered given that we start in state $s$ and action $A$ is taken.

The solution can be found using the \textit{value iteration} method.

\begin{proposition} \label{prop:termination}
There exists an optimal control solution to
(\ref{eqn:DP}) which is obtainable via value iteration.
\end{proposition}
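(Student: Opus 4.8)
I would view (\ref{eqn:DP}) as an undiscounted \emph{positive} dynamic program and show that undiscounted value iteration, started from the zero function, increases monotonically to $J^*$, with the induced one-step-lookahead rule being optimal. Two structural facts do the work. (i) \emph{Bounded nonnegative rewards:} by Assumption~\ref{as:reward}, $0\le w_j(t)\le w_j(1)$, so each job contributes at most $w_j(1)$ and $0\le J_t^\pi(s)\le W:=\sum_{j\in\Jscr}w_j(1)<\infty$ for every policy $\pi$, state $s$, slot $t$; hence $J^*$ is finite. (ii) \emph{An a.s.-finite effective horizon:} since $w_j(\cdot)$ is non-increasing, delaying the start of a job never increases $E[w_j(\cdot+\sigma_j)]$, so there is an optimal \emph{non-idling} policy, and under any non-idling policy the last job is started by time $\sum_{j\in\Jscr}\sigma_j$, which is finite a.s. by Assumption~\ref{as:sigma}; from a state in which every job is already in progress or complete, no further reward can be locked.

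Next I would set $J^{(0)}\equiv 0$ and $J^{(n+1)}=\Tscr J^{(n)}$, where $\Tscr$ is the operator on the right of (\ref{eqn:DP}) acting on the whole family $(V_t)_{t\ge 1}$, so that $(\Tscr V)_t$ uses only $V_{t+1}$. Nonnegativity of the per-step rewards gives $J^{(1)}\ge J^{(0)}$, and monotonicity of $\Tscr$ propagates this to $J^{(n+1)}\ge J^{(n)}$ for all $n$; a parallel induction gives $J^{(n)}_t(s)\le W$ (it is the optimal value of the problem with reward truncated to slots $t,\dots,t+n-1$). Hence $J^{(n)}\uparrow J^{(\infty)}\le W$ pointwise, and monotone convergence lets the limit pass through the expectation and the \emph{finite} maximum over $\Ascr(s)$ in (\ref{eqn:DP}), so $J^{(\infty)}$ solves the Bellman equation.

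To identify $J^{(\infty)}$ with $J^*$ I would sandwich. On one side, $J^{(n)}_t(s)\le J^*_t(s)$, since a policy for the $n$-step-truncated problem is also feasible for the full problem and only forgoes nonnegative reward. On the other, fix any policy $\pi$ and split its expected return from $(s,t)$ into the reward locked in slots $t,\dots,t+n-1$ --- at most $J^{(n)}_t(s)$ --- plus the rest, which is at most $\sum_{j\in\Jscr}w_j(1)\,P_\pi(t+n\le t_j<\infty\mid s_t=s)$, where $t_j$ is the (possibly infinite) slot in which $\pi$ starts job $j$; this tail vanishes as $n\to\infty$ by dominated convergence (a never-scheduled job contributes nothing; a job with finite $t_j$ satisfies $t_j<t+n$ for large $n$). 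Letting $n\to\infty$ yields $J^\pi_t(s)\le J^{(\infty)}_t(s)$, and taking the supremum over $\pi$ gives $J^*_t(s)\le J^{(\infty)}_t(s)$. Finally, $\Ascr(s)$ being finite, the maximum in (\ref{eqn:DP}) is attained, so there is a (time-varying, state-feedback) policy $\pi^*$ with $\pi^*_t(s)\in\argmax_{A\in\Ascr(s)}\{E[R_t(s,A)]+E[J^*_{t+1}(\tilde S(s,A))]\}$, which we may take non-idling; unrolling the identity $J^{\pi^*}_t(s)-J^*_t(s)=E\big[J^{\pi^*}_{t+n}(\tilde S_n)-J^*_{t+n}(\tilde S_n)\big]$ and invoking fact (ii) (for large $n$, a.s.\ every job has been started at the state $\tilde S_n$ reached under $\pi^*$, so both terms vanish there) together with dominated convergence gives $J^{\pi^*}=J^*$.

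The step I expect to be the real obstacle is the one forced by the absence of discounting: showing that value iteration actually converges to $J^*$ and that the Bellman fixed point is realized by an implementable policy. Both rest on facts (i)--(ii) --- bounded nonnegative rewards and the a.s.-finite effective horizon coming from monotone decay together with $P(\sigma_j<\infty)=1$ --- which are exactly what justify interchanging limit with maximum and the vanishing of the reward tail; the rest is bookkeeping. (This places the model within the positive / stochastic-shortest-path class, for which such statements are standard.)
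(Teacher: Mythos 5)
Your proof is correct and follows essentially the same route as the paper: both rest on a.s.\ finite termination (from $P(\sigma_j<\infty)=1$) and bounded nonnegative decaying rewards, which place the problem in the stochastic-shortest-path / positive dynamic-programming class where value iteration is known to converge. The paper's proof simply observes that some policy (e.g.\ sequential processing on one server) empties the queue in finite time and then cites Bertsekas for the standard conclusion, whereas you supply the underlying monotone value-iteration and sandwich argument that the citation would otherwise cover.
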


\begin{proof}Once the queue is emptied, Bellman's recursion
terminates.  When $\mathbf{x} = \mathbf{0}$, there are no more
jobs left to be processed. No action can generate any reward and the
optimal policy will never leave this state once it reaches it. There exists a
policy which will complete all jobs and cause the
Bellman's recursion to terminate in finite time.  (i.e. we process \emph{all} jobs on a single server, $n$, in random order.  Because $P(\sigma_j < \infty) = 1$, all jobs will be completed in finite time.) This
guarantees the existence of a stationary optimal policy which is
obtainable via value iteration \cite{bertsekas}.
\end{proof}

Of course, this approach is computationally intractable: the state space (the set of all $(\mathbf{x},\mathbf{y},\mathbf{z})$) is exponentially large. As such, this makes such problems pragmatically difficult.

\subsection{A Hardness Result}
We now show that a special case of the non-preemptive scheduling problem is NP-hard.  Consider a deterministic version of the problem where the completion time of job $j$ is $\sigma_j$ with probability $1$.  Let $w_j(t) = v_j$ for $t \leq K$ and $w_j(t) = 0$ otherwise.  We can think of $v_j$ as the value of job $j$ and $K$ as the shared deadline amongst all jobs.
This version of the non-preemptive scheduling problem with decaying rewards can be reduced to the 0/1 Multiple-Knapsack Problem which is known to be NP-complete.
\begin{theorem}\label{th:NPhard}
The non-preemptive scheduling problem with decaying rewards is NP-hard.
\end{theorem}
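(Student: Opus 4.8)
The plan is to give a polynomial-time reduction from the $0/1$ Multiple-Knapsack Problem to the deterministic, common-deadline special case described just above, and to show that the two problems have the same optimal value; then a polynomial-time optimal scheduler would yield a polynomial-time optimal knapsack packing. Because the $N$ processors are identical (all of rate $1$) and the cut-off $K$ is shared by all jobs, each processor will play the role of one knapsack of capacity (essentially) $K$.

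Concretely, from a Multiple-Knapsack instance with $N$ knapsacks of common capacity $K$ and items $j\in\{1,\dots,J\}$ of integer size $\sigma_j$ and value $v_j$, I would build the scheduling instance with $N$ processors and $J$ jobs, job $j$ having deterministic processing time $\sigma_j$ (so $f_j$ is a point mass at $\sigma_j$ and Assumption~\ref{as:sigma} holds) and reward $w_j(t)=v_j\,\mathbf{1}_{\{t\le K\}}$ (non-increasing, so Assumption~\ref{as:reward} holds). This construction has size linear in the knapsack input. Note that already the case $N=1$ encodes the ordinary $0/1$ Knapsack Problem, so the argument in fact yields NP-hardness even on a single processor.

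Next I would prove the equivalence of optimal values. Given a feasible packing $(S_1,\dots,S_N)$ with $\sum_{j\in S_n}\sigma_j\le K$ for each $n$, schedule the jobs of $S_n$ back-to-back on processor $n$ starting from the first slot; each such job completes no later than the deadline and hence collects $v_j$, while the jobs outside $\bigcup_n S_n$ are run afterwards and collect $0$, for total reward $\sum_n\sum_{j\in S_n}v_j$. Conversely, from any schedule let $S_n$ be the set of jobs that run on processor $n$ and complete by the deadline. Non-preemptive single-processor service forces the $S_n$ to be pairwise disjoint and forces the jobs in $S_n$ to occupy $\sum_{j\in S_n}\sigma_j$ distinct slots lying inside the deadline window, so $\sum_{j\in S_n}\sigma_j\le K$; since a job contributes to the reward only if it completes by $K$, that schedule's reward is exactly $\sum_n\sum_{j\in S_n}v_j$, the value of a feasible packing. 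Taking maxima on both sides gives that the optimal scheduling reward equals the optimal packing value, so a polynomial-time algorithm for the former would solve $0/1$ (Multiple-)Knapsack optimally, hence its decision version, in polynomial time — impossible unless $\mathrm{P}=\mathrm{NP}$. Since this is a special case of the general non-preemptive decaying-reward problem, the latter is NP-hard.

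The only step requiring real care — everything else is bookkeeping — is the converse direction: one must observe that in the constructed instance the objective depends on a schedule only through \emph{which} jobs meet the common deadline, so that inserting idle time or permuting jobs within a processor never helps, whence each processor's deadline-meeting jobs can be read off as the contents of a capacity-$K$ knapsack. Pinning down the exact capacity under the slot convention ``a job started in slot $t$ finishes at the start of slot $t+\sigma_j$'' (so the effective capacity is $K-1$ rather than $K$, harmlessly absorbed by enlarging $K$ by one in the reduction) is the main fussy point, but it raises no conceptual obstacle.
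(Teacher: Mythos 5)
Your proposal uses essentially the same idea as the paper: the deterministic, common-deadline, step-reward special case of the scheduling problem is equivalent to the $0/1$ Multiple-Knapsack Problem, with each processor playing the role of a capacity-$K$ knapsack. The difference is one of care and correctness of direction. The paper's prose says the scheduling problem ``can be reduced to the $0/1$ Multiple-Knapsack Problem,'' which, read literally, is the wrong direction for proving NP-hardness of scheduling — one must reduce the known-hard problem \emph{to} the target problem. You state the reduction in the correct direction (Multiple-Knapsack $\leq_p$ scheduling), spell out both inclusions of the value equivalence, and flag the one genuinely fussy point (the slot/offset convention determining whether the effective knapsack capacity is $K$ or $K-1$), which the paper glosses over. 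Both proofs ultimately rest on the same observation that the two optimal values coincide; yours is simply the cleaned-up version that would survive a referee's scrutiny.
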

\begin{proof}
In the case of the 0/1 Multiple Knapsack Problem, there are $J$ objects of size $\sigma_j$ and value $v_j$ to be placed in $N$ knapsacks of capacity $K$.  Reward is only accrued if the entire object is placed in a knapsack--fractional objects are not possible.  The optimal packing of objects is equal to the optimal scheduling policy for the non-preemptive scheduling with decaying rewards problem.  This reduction takes constant time.
This completes the proof.
\end{proof}

\section{A Greedy Heuristic for Non-preemptive Scheduling of Decaying Jobs}\label{sec:greedy}
In light of Theorem \ref{th:NPhard}, finding an optimal policy for the scheduling problem at hand is computationally intractable.  Therefore, it is highly desirable to find simple, but effective heuristics for practical deployment.  In this section, we examine one such policy.  

A natural heuristic policy one may consider for the stochastic depletion problem is given by the greedy policy which in state $s$ with $F = \sum_n \1_{\{z(s)_n = 0\}}$ free processors chooses the $F$ available jobs with maximum expected utility rate earned over the following time-step, $\frac{E[w_j(t(s)+\sigma_j)]}{E[\sigma_j]}$. That is
\begin{equation}
\begin{split}
\pi_t^{g}(s) = \argmax_{A \in \Ascr} \sum_{(j,n)\in A} \frac{E[w_j(t(s)+\sigma_j)]}{E[\sigma_j]}
\end{split}
\end{equation}
Such a policy is \emph{adaptive} but ignores the evolution of the reward functions, $w_j(t)$, and its impact on rewards accrued in future states.  We denote by $J_t^g(s)$ the reward garnered by the greedy policy starting in state $s$.

\subsection{Sub-optimality of Greedy Policy}
We start with  an instructive example which demonstrates the nature (and degree) of sub-optimality of the greedy policy.

\begin{example}(Greedy Sub-Optimality)
\label{ex:suboptimality}
Consider the case with $2$ jobs and $1$ machine.  Time is initialized to $0$ so that $t=0$, $J = 2$ and $N = 1$. Assume that each job is waiting to begin processing so that $x_1 = x_2 = 1$ and $y_1 = y_2 = \emptyset$. The service times are Geometric and the expected service times for job $1$ and $2$ are $M$ and $1$, respectively, i.e. $p_1 = \frac{1}{M}$ and $p_2 = 1$.  The reward functions are:
\[
\begin{split}
{\rm For} \ j=1:& \
w_j(t) =
\begin{cases}
M^2, & \text{$t = 1$}\\
0, & \text{$t > 1$}
\end{cases}
\\
{\rm For} \ j=2:& \
w_j(t) =
\begin{cases}
1+\epsilon, & \text{$\forall t$}
\end{cases}
\end{split}
\]
for $\epsilon > 0$. Hence, the completion of job $1$ generates rewards of $M^2$ if it is completed in the first time slot; otherwise, no revenue is received. On the other hand, job $2$ results generates reward of $1+\epsilon$, regardless of which time slot it is completed in.  Therefore, the reward rates are:
\begin{eqnarray}
\frac{E[w_1(t+\sigma_1)]}{E[\sigma_1]} & =
\begin{cases}
1, & \text{$t = 1$}\\
0, & \text{$t > 1$}
\end{cases}
\nonumber\\
\frac{E[w_2(t+\sigma_2)]}{E[\sigma_2]} & =
\begin{cases}
1+\epsilon, & \text{$\forall t$}
\end{cases} \nonumber
\end{eqnarray}
In time slot $t = 0$, the greedy policy schedules job $2$ because its reward rate ($1+\epsilon$) is great than that of job $1$ ($1$). Job $2$ completes processing in one time slot and generates reward of $w_2(1) = 1 + \epsilon$.  At $t=1$, only job $1$ remains to be processed.  However, the service time for job $1$ is at least one time slot, so when job $1$ is completed at $t = 1+\sigma_2>1$, $0$ reward is generated.  Hence the greedy policy generates a total expected reward of  $1+\epsilon$.

On the other hand, the optimal policy realizes the reward of job $1$ is degrading and schedules it first.  With probability $1/M$, job $1$ will complete by time slot $t = 1$ and generate reward $M^2$.  However, with probability $1-1/M$ it will take more than one time slot and generate no reward since $w_1(t) = 0$ for $t > 1$. Upon the completion of job $1$,  job $2$ is scheduled and it completes processing in $1$ time slot.  Since $w_2(t) = 1 + \epsilon$ for all $t$, this results in additional reward of $1+\epsilon$.   Hence, the total expected reward generated by the optimal policy is $M+1+\epsilon$.  Comparing the performance of the optimal and greedy policies gives $J^*(s)/J^g(s) = (M+1+\epsilon)/(1 + \epsilon)$.
\end{example}
Letting $\epsilon \rightarrow 0$, it is easy to see that the greedy policy results in an $M+1$ approximation, where $M = \frac{E[\max_j \sigma_j]}{\min_j E[\sigma_j]} = \Delta$.  This suggests that the approximation of the greedy policy is dependent on the relationship between job service times.  The following subsection specifies this relation.

%
\subsection{The Greedy Heuristic is an online $(2+\Delta)$-Approximation}\label{ssec:bound}
In this section we will show that the greedy heuristic is within a factor of $2+\Delta$ of optimal, where
\begin{equation}\label{eq:delta}
\Delta =\frac{E[\sigma_{\max}]}{\min_jE[\sigma_j]}
\end{equation}
Before we can prove this result, we need to first show a few properties of the system and the optimal value function, $J_t^*$.

We begin with a monotonicity property based on the number of jobs remaining to be processed.  Intuitively, if one were given an additional set of jobs to process, the reward that can be garnered by the completion of the original set of jobs in conjunction with the additional jobs will be more than if those extra jobs were not available.  Consider two states: $s$ and $s'$ which are nearly identical except state $s$ has more jobs to process than state $s'$. In other words, all jobs that have been completed in the $s$-system have also been completed in the $s'$-system.   Similarly, any job that has started processing in the $s$-system has also started processing in the $s'$-system at the exact same time on the same machine.  Any additional jobs in state $s$ are jobs that have not started processing, but have already been completed in state $s'$. That is, the additional jobs are only available for processing in the $s$-system. Then the reward-to-go generated starting in state $s$ is larger than that starting in state $s'$.   The following lemma formalizes this intuition.

\begin{lemma}(Monotonicity in Jobs)\label{lemma:mono_job}
Consider states $s$ and $s'$ such that state $s$ has more jobs than state $s'$  and any job that has started in state $s'$ started processing in the exact same time slot in state $s$ so that for each job $j$: $x(s)_j \geq x_j(s')$ and
\begin{eqnarray}
y(s)_j = \left\{
           \begin{array}{ll}
             y(s')_j, & \hbox{if $x(s)_j = x(s')_j$;} \\
             \emptyset, & \hbox{if $x(s)_j > x(s')_j$.}
           \end{array}
         \right. \nonumber
\end{eqnarray} Also, in both states, each processor $n$ is either not busy or busy processing the same job: $z(s)_n = z(s')_n$.  For all states $s$ and $s'$ which satisfy these conditions, the following holds:
$$J_t^*(s) \geq J_t^*(s').$$
\end{lemma}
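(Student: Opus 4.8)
The plan is a coupling / policy-mimicking argument. Let $\mathcal{C}=\{j:x(s')_j=1\}$ be the jobs still to be processed in the $s'$-system; by hypothesis $\mathcal{C}\subseteq\{j:x(s)_j=1\}$, and the remaining ``extra'' jobs $\mathcal{E}=\{j:x(s)_j=1,\,x(s')_j=0\}$ are exactly those that are still unstarted in $s$ but already finished in $s'$. Couple the two systems on a common probability space by feeding them the \emph{same} realization of the service-time vector $(\sigma_1,\dots,\sigma_J)$. Fix a policy $\pi^*$ attaining $J^*_t(s')$ (its existence as a stationary policy is guaranteed by Proposition \ref{prop:termination}).

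I would then construct a (generally suboptimal) policy $\pi$ for the $s$-system that \emph{mimics} $\pi^*$ on the jobs of $\mathcal{C}$: at each slot $t'\ge t$, $\pi$ matches job $j\in\mathcal{C}$ to processor $n$ precisely when $\pi^*$ does in the $s'$-system, and on the processors left free after these assignments $\pi$ schedules the jobs of $\mathcal{E}$ in some fixed feasible way (e.g., on a designated idle server once no common job is waiting, so that all of $\mathcal{E}$ eventually completes and the horizon $T$ is a.s.\ finite by Assumption \ref{as:sigma}). The key verification, by induction on $t'$, is that this is well-defined, admissible, and feasible: because the service times are coupled and $\pi$ copies $\pi^*$'s decisions on $\mathcal{C}$, each common job $j\in\mathcal{C}$ has identical observable status $(x_j,y_j)$ in the two systems at every $t'$ and completes at the same slot; hence a processor is busy with a common job in the $s$-system exactly when it is busy with that same job in the $s'$-system, so whenever $\pi^*$ needs a free processor for a common job, that processor is free in the $s$-system as well (the jobs of $\mathcal{E}$ are placed only on processors $\pi^*$ does not use, so they never obstruct the mimicking). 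Moreover the $s'$-state $s'_{t'}$ to which $\pi^*$ reacts is a deterministic function of the observable $s$-state $s_{t'}$ and the fixed initial data of $s'$, so $\pi$ is non-anticipating.

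Given feasibility, the reward comparison is routine. Along the coupling every $j\in\mathcal{C}$ starts at the same slot with the same $\sigma_j$ in both systems, hence locks the identical reward $w_j(\cdot+\sigma_j)$; summing over $\mathcal{C}$ and taking expectations, the expected reward $\pi$ collects from the common jobs equals $J^{\pi^*}_t(s')=J^*_t(s')$. The extra jobs of $\mathcal{E}$ contribute additional reward that is non-negative by Assumption \ref{as:reward}. Therefore
\[
J^*_t(s)\;\ge\;J^\pi_t(s)\;=\;J^*_t(s')\;+\;E\Big[\text{reward collected from }\mathcal{E}\text{ under }\pi\Big]\;\ge\;J^*_t(s'),
\]
which is the claim.

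\textbf{Main obstacle.} The only substantive step is the inductive invariant that the mimicking policy remains feasible in the $s$-system, i.e.\ that the coupling keeps the common jobs perfectly synchronized across the two systems and that processor availability is never violated despite the presence of the extra jobs; once that invariant holds, the reward inequality follows purely from non-negativity of the $w_j$. An alternative that avoids the coupling is to prove the inequality for each iterate of the value-iteration recursion (\ref{eqn:DP}) by induction and pass to the limit via Proposition \ref{prop:termination}; the same feasibility bookkeeping reappears there as the inductive step.
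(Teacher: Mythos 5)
Your proposal matches the paper's proof in essence: couple the two systems on the service-time realizations, run a policy $\tilde\pi$ in the $s$-system that mimics the optimal $s'$-policy on the common jobs, and use non-negativity of rewards (Assumption \ref{as:reward}) to discard the extra jobs' contribution, giving $J^*_t(s)\ge J^{\tilde\pi}_t(s)=J^*_t(s')+\text{(extra)}\ge J^*_t(s')$. One caution on feasibility that your lead-in sentence glosses over: you should \emph{not} schedule the extra jobs opportunistically on ``processors left free after these assignments'' during the mimicking phase, since a processor non-preemptively occupied by an extra job might later be needed by $\pi^*(s')$ for a common job and this cannot be foreseen under random $\sigma_j$; the paper's choice, consistent with your parenthetical example, is to block all extra jobs until every common job has completed, after which all processors are free and no conflict can arise.
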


\begin{proof}
Consider a coupling of the systems starting at $s$ and $s'$ such that they see the same realizations of service times $\sigma_j$ (and residual service times for jobs that have already started processing).  This is possible for all jobs $j \in \Jscr_{s'} = \{j \in \Jscr|x(s')_j = 1\} \subseteq \Jscr_s = \{j\in \Jscr|x(s)_j = 1\}$ because they have the same distributions.   $\Jscr_s$ and $\Jscr_{s'}$ denote the jobs to be completed under the systems starting in states $s$ and $s'$, respectively.

Let $\pi^*(s')$ denote the optimal scheduling policy starting from state $s'$.
Consider a policy $\tilde{\pi}$ that starts in state $s$ and mimics $\pi^*(s')$ until all jobs $j \in \Jscr_{s'}$ are completed and completes the rest of the jobs $j \in \Jscr_s \setminus \Jscr_{s'}$ in sequential order.  That is, under  $\tilde{\pi}$ the scheduler initially pretends that jobs $j\in  \Jscr_s \setminus \Jscr_{s'}$ do not exist and uses the optimal policy under this assumption; once these jobs are completed, it processes the remaining jobs in an arbitrary order.  Said another way, the $\tilde{\pi}$ policy blocks processing of the additional jobs in state $s$ ($j\in  \Jscr_s \setminus \Jscr_{s'}$) and optimally processes the remaining jobs.  Once these jobs ($j\in \Jscr_{s'}$) are completed, the $\tilde{\pi}$ policy `unlocks' the remaining, additional jobs and processes them in an arbitrary manner.  Fig. \ref{fig:monojobs} demonstrates the relationship between $\pi^*(s')$ and $\tilde{\pi}$ for a single server over a particular sample path for service times.

\begin{figure}[htb]
\begin{center}
\psfrag{spsys}{$s'$-system}
\psfrag{ssys}{$s$-system}
\psfrag{s1}{$\sigma_1$}
\psfrag{s2}{$\sigma_2$}
\psfrag{s3}{$\sigma_3$}
\psfrag{s4}{$\sigma_4$}
\psfrag{s5}{$\sigma_5$}
\psfrag{s6}{$\sigma_6$}
\psfrag{t}{$t$}\psfrag{t1}{$T_{s'}$}\psfrag{t2}{$T_{s}$}
\includegraphics[scale=.8]{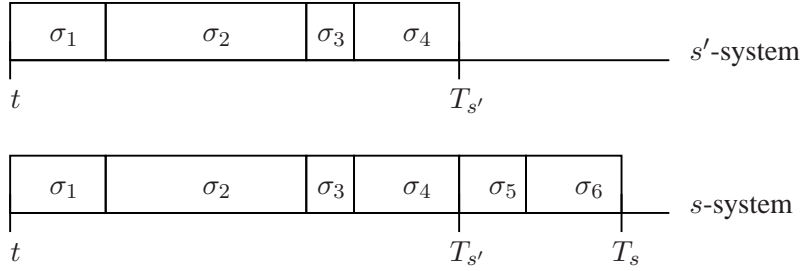}
\caption{Monotonicity in Jobs: A single server scenario.  The $s$-system is given $2$ additional jobs, $j =5$ and $j = 6$.  The $s'$-system uses policy $\pi^*(s')$ to optimally process all jobs $j = 1,2,3,4$. The $s$-system uses policy $\tilde{\pi}$ which mimics $\pi^*(s')$ until all jobs $j = 1,2,3,4$ are completed at time $T_{s'}$ and then processes the remaining additional jobs. }\label{fig:monojobs}
\end{center}
\end{figure}

Let $T_j$ be the completion time of job $j$ for the $s$-system when using policy $\tilde{\pi}$. Similarly, let $T_j^*$ be the completion time of job $j$ for the $s'$-system under the optimal policy, $\pi^*(s')$.  By our coupling, for all $j \in \Jscr_{s'}$, $T_j = T^*_j$, i.e. the completion time of job $j$ is identical under the $s$-system which uses policy $\tilde{\pi}$ and under the $s'$-system which uses policy $\pi^*(s')$. (Notice in Fig. \ref{fig:monojobs}, jobs $1,2,3,4$ complete at the same time in the $s'$ and $s$-systems).  We use the notation $J_t^*(s|\sigma)$ as the optimal reward-to-go given the filtration of the job service times, i.e. given a sample path of realizations of the $\sigma_j$.  We employ similar notation for $J^{\tilde{\pi}}_t$. We have,
\begin{eqnarray}
J_t^*(s|\mathbf{\sigma}) & \geq & J_t^{\tilde{\pi}}(s|\mathbf{\sigma}) \nonumber\\
& = & \sum_{j \in \Jscr}w_j(T_j) \nonumber \\
& = & \sum_{j \in \Jscr_{s'}}w_j(T_j) + \sum_{j \in \Jscr_s \setminus \Jscr_{s'}}w_j(T_j) \nonumber \\
& = &  J_t^*(s'|\mathbf{\sigma})  + \sum_{j \in \Jscr_s \setminus \Jscr_{s'}}w_j(T_j) \nonumber \\
& \geq & J_t^*(s'|\mathbf{\sigma})\nonumber
\end{eqnarray}
The first inequality comes from the optimality of $J_t^*(\cdot)$. The first equality comes from the definition of the reward function, $T_j$, and $\tilde{\pi}$ policy.  The third equality comes from the coupling of the two systems so that $T_j = T_j^*$ for all $j\in \Jscr_{s'}$. The last inequality comes from non-negative property of the rewards in Assumption \ref{as:reward}.  Taking expectations over $\sigma_j$ yields the desired result.
\end{proof}

Next, we consider a property of the optimal policy.  In every time slot, there will be a set (possibly empty) of free machines ($z_n = 0$).  In each time slot,  the optimal policy  will assign a job to all \emph{free} machines, assuming there are enough available jobs. That is, while there are still jobs waiting to be processed, no machine will idle under the optimal policy.
\begin{lemma}(Non-idling)\label{lemma:freemach} Suppose in state $s$, there are $F = |\{n \in \Nscr|z(s)_n = 0\}|$ free machines, and the number of jobs remaining to be processed is $K = |\{j \in \Jscr| x(s)_j = 1, y(s)_j = \emptyset\}|$.  Then, under the optimal policy $\pi^*(s)$, the number of job-processor pairs executed in the next time slot will be:
$$|A| = \min\{K,F\}.$$
i.e. the optimal policy is non-idling.
\end{lemma}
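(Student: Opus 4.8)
The plan is to argue by backward induction on the time index $t$, working with the truncated reward functions $\tilde w_j$ of Section~\ref{sec:model} to reduce to a finite horizon (the infinite‑horizon case following by a limiting argument), so that $J^*_{t'}\equiv 0$ past the horizon supplies the base case. The inductive hypothesis is: \emph{for every state at any time $t'>t$ there is an optimal policy that never idles a processor while a job waits.} Fix a state $s$ at time $t$ with $F$ free processors and $K$ waiting jobs. By the Bellman recursion~(\ref{eqn:DP}) it is enough to show that some maximizing action $A$ has $|A|=\min\{K,F\}$, and for this I would establish a \emph{one‑step improvement} property: for every feasible $A\in\Ascr(s)$ with $|A|<\min\{K,F\}$ there is a feasible $A'$ with $|A'|=|A|+1$ and
\[
E[R_t(s,A')]+E[J^*_{t+1}(\tilde S(s,A'))]\ \ge\ E[R_t(s,A)]+E[J^*_{t+1}(\tilde S(s,A))].
\]
Iterating this on a maximizing action then yields a maximizer of size $\min\{K,F\}$, which is the claim. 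Note the inductive hypothesis is used only at time $t+1$.

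To construct $A'$, let $\bar\pi$ be an optimal, \emph{non‑idling} (by the inductive hypothesis) policy for the continuation problem started from $\tilde S(s,A)$ at time $t+1$. Since $|A|<\min\{K,F\}$, that continuation still has at least $F-|A|\ge 1$ free processors and $K-|A|\ge 1$ waiting jobs at time $t+1$, so $\bar\pi$ starts at least one waiting job, say $j^\diamond$, in slot $t+1$; this $j^\diamond$ is waiting in $s$ and untouched by $A$. Pick any free‑in‑$s$ processor $n^\diamond$ not used by $A$ and set $A'=A\cup\{(j^\diamond,n^\diamond)\}$. Because $R_t(s,A')=R_t(s,A)+w_{j^\diamond}(t+\sigma_{j^\diamond})$ and $w_{j^\diamond}\ge 0$ by Assumption~\ref{as:reward}, the immediate rewards already satisfy $E[R_t(s,A')]\ge E[R_t(s,A)]$, so everything reduces to the reward‑to‑go bound $E[J^*_{t+1}(\tilde S(s,A'))]\ \ge\ E[J^*_{t+1}(\tilde S(s,A))]-E[w_{j^\diamond}(t+\sigma_{j^\diamond})]$.

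For this I would couple the two continuation systems on a common realization of all service times (the residual of $j^\diamond$ in the $A'$‑system being the obvious companion of the fresh draw $\sigma_{j^\diamond}$ in the $A$‑system; automatic when the $\sigma_j$ are memoryless and a routine conditional coupling otherwise). Running $\bar\pi$ in the $A$‑continuation completes $j^\diamond$ at time $(t+1)+\sigma_{j^\diamond}$ and every other remaining job $j$ at a time $T_j$, with $E[\sum_j w_j(T_j)]=E[J^*_{t+1}(\tilde S(s,A))]$. In the $A'$‑continuation $j^\diamond$ was started one slot earlier, so on this path it occupies precisely the processor‑slots $\{t+1,\dots,t+\sigma_{j^\diamond}-1\}$, a \emph{subset} of the slots $\{t+1,\dots,t+\sigma_{j^\diamond}\}$ it uses under $\bar\pi$. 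Hence the policy $\hat\pi$ that lets $j^\diamond$ run to completion on $n^\diamond$ and, for every other job, copies $\bar\pi$'s start time (on a relabeled but identical processor) is feasible: a slot‑by‑slot induction shows the number of jobs simultaneously in progress under $\hat\pi$ never exceeds the corresponding number under $\bar\pi$---in fact it is smaller by exactly one in slot $t+\sigma_{j^\diamond}$---hence never exceeds $N$, so $\hat\pi$ always has a free processor whenever $\bar\pi$ does. Thus every $j\ne j^\diamond$ still completes at time $T_j$, and since $j^\diamond$'s reward $w_{j^\diamond}(t+\sigma_{j^\diamond})$ was already booked at time $t$, $E[J^*_{t+1}(\tilde S(s,A'))]\ge E[\sum_{j\ne j^\diamond}w_j(T_j)]=E[J^*_{t+1}(\tilde S(s,A))]-E[w_{j^\diamond}((t+1)+\sigma_{j^\diamond})]\ge E[J^*_{t+1}(\tilde S(s,A))]-E[w_{j^\diamond}(t+\sigma_{j^\diamond})]$, the last step by Assumption~\ref{as:reward}. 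Combined with the reward identity this gives the one‑step improvement inequality.

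\textbf{Main obstacle.} The delicate point is the feasibility of the exchange, which dictates \emph{which} pair may be added. Pulling forward an arbitrary idle waiting job need not help: a long job monopolizing a processor can delay the others so much that, despite reward decay, the continuation suffers---this is exactly the mechanism behind Example~\ref{ex:suboptimality}. Pulling forward by one slot the specific job that an optimal continuation policy starts in the \emph{next} slot can only shrink that job's processor footprint, which is what keeps the mimicking schedule feasible; this is also why the inductive hypothesis (so that such a ``next‑slot'' job exists) is needed rather than a bare one‑shot interchange. A small amount of extra care is needed to make the choice of $j^\diamond$ well defined, since the time‑$(t+1)$ state $\tilde S(s,A)$ is itself random: one takes $\bar\pi$ to be a non‑idling optimal policy whose slot‑$(t+1)$ decision follows a fixed priority among the waiting jobs, the existence of such a $\bar\pi$ being a routine interchange argument. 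Lemma~\ref{lemma:mono_job} is not essential for this route, though it gives an alternative way to dispose of the degenerate case $\sigma_{j^\diamond}=1$.
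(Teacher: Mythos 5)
Your route is genuinely different from the paper's and, on its face, more careful. The paper argues by direct contradiction in one short step: assume $\pi^*$ idles at $(s,t)$, construct $\tilde\pi$ by shifting the entire schedule of the idling machine one slot earlier, and invoke Assumption~\ref{as:reward} to conclude the shift cannot lose reward. You instead run a backward induction on the time index and prove a one-step-exchange inequality: any $A$ with $|A|<\min\{K,F\}$ is weakly dominated by some $A'=A\cup\{(j^\diamond,n^\diamond)\}$. The key insight---pull forward precisely the job that an optimal non-idling continuation $\bar\pi$ would start in slot $t+1$, so that $j^\diamond$'s processor footprint \emph{shrinks} from $\{t+1,\dots,t+\sigma_{j^\diamond}\}$ to $\{t+1,\dots,t+\sigma_{j^\diamond}-1\}$ rather than merely shifting---is exactly what makes the slot-by-slot feasibility of the mimicking policy $\hat\pi$ go through, and it explains why a naive ``pull any waiting job forward'' exchange would not work. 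This is more explicit than the paper's one-line shift and makes the coupling structure visible.

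The gap is the step you flag as needing ``a small amount of extra care'' and dismiss as a ``routine interchange argument'': the well-definedness of $j^\diamond$. The time-$(t+1)$ state $\tilde S(s,A)$ is random (it depends on which of the jobs in progress or in $A$ complete after one slot, hence on how many processors are free at $t+1$), so the starting set $S_\omega$ chosen by $\bar\pi$ at $t+1$ can vary with the realization $\omega$, and $|S_\omega|=\min\{K-|A|,F_\omega\}$ grows with the number of free machines $F_\omega$. But $A'$ is a time-$t$ action and must be fixed before $\omega$ is observed, so you need a single job $j^\diamond$ lying in $S_\omega$ for \emph{every} $\omega$. That requires the sets $S_\omega$ to be nested as $F_\omega$ increases (a fixed-priority $\bar\pi$), and this is not routine: nothing in the problem structure forces an optimal continuation to augment rather than swap its starting set when more processors become free, since which waiting jobs to start can depend on the residual/completion information revealed at $t+1$. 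Without a uniform $j^\diamond$, on realizations where $\bar\pi$ first starts $j^\diamond$ at some $\tau>t+1$, the footprint $\{t+1,\dots,t+\sigma_{j^\diamond}-1\}$ under $A'$ is shifted earlier than $\{\tau,\dots,\tau+\sigma_{j^\diamond}-1\}$, not contained in it, and the dominance ``$\hat\pi$ never has more jobs in progress than $\bar\pi$'' fails. To close this you would need to actually prove the fixed-priority/nesting claim, which looks like a lemma of comparable difficulty to the one being proved; or restructure the exchange so that it does not depend on matching $\bar\pi$'s slot-$(t+1)$ choice. The paper's own proof sidesteps this particular difficulty by shifting a whole machine's sample-path schedule rather than a single job (trading it for a different, unaddressed adaptivity issue), so your framing usefully exposes a subtlety the paper's brevity hides---but as written it does not dispose of it.
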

\begin{proof}
The proof is by contradiction.  What needs to be shown is that nothing can be gained by idling ($|A| < \min\{K,F\}$).  Suppose that under the optimal policy, a processor remains free (idles), even though there is an available job to work on. Consider another policy $\tilde{\pi}$ which is identical to the $\pi^*$ policy except it begins processing all jobs on the idling machine one time slot earlier.  Due to assumption \ref{as:reward}, by processing the jobs earlier, this will result in an increase in reward.  This contradicts the optimality of the idling policy; hence, no optimal policy will idle.
\end{proof}

Now consider two systems which are identical, except one machine is tied up longer in the second system.  The following lemma says that the maximum amount of additional revenue accrued by the first system for being able to start processing earlier is given by the reward rate of the greedy job; that is, the job of maximum reward rate amongst those in processing or waiting.
\begin{lemma}(Greedy Revenue)\label{lemma:scaled_revenue}
Consider a state $s_t = s$ in time slot $t$ and let $g$ denote the index of a greedy job, i.e. $g = \argmax_{j\in \Jscr_s} \frac{E[w_j(t+\sigma_j)]}{E[\sigma_j]}$ for all jobs which are mid-processing or have not started ($\Jscr_s = \{k \in \Jscr|x_k(s) = 1\}$).

Denote by $s_g$ and $s_i$ two states which are related to state $s$ in the following manner.  The two states are identical to state $s$, except on free machine $n_g$ ($z(s)_{n_g} = 0$).  In state $s_g$, machine $n_g$ is occupied by a {\bf replica} of job $g$ meaning it has the same service time as job $g$, however, its completion does not generate any rewards nor does it effect the completion of the original job $g$.  Similarly, in state $s_i$, machine $n_g$ is occupied by a replica of job $i$.  Said in notation:  $x(s_i)_j = x(s_g)_j = x(s)_j$ and $y(s_i)_j = y(s_g)_j = y(s)_j$ for all $j$, $z(s_i)_n = z(s_g)_n = z(s)_n$ for all $n \not = n_g$, and $z (s_g)_{n_g} = g$ while $z(s_i)_{n_g} = i$ for some arbitrary job index $i$ and machine $n_g$.  Then,
$$E[J_t^*(s_i)] \leq \Big(1-\frac{E[\sigma_i]}{E[\sigma_g]} + \frac{E[\max_{j\in\Jscr_s}\sigma_j]}{E[\sigma_g]}\Big)E[w_g(t+\sigma_g)] + E[J_t^*(s_g)]$$
\end{lemma}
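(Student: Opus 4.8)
Write $\rho_j:=\frac{E[w_j(t+\sigma_j)]}{E[\sigma_j]}$ for $j\in\Jscr_s$, so that $g$ is the job with largest $\rho_g$ and the claim reads $E[J_t^*(s_i)]-E[J_t^*(s_g)]\le\rho_g\big(E[\sigma_g]-E[\sigma_i]+E[\max_{j\in\Jscr_s}\sigma_j]\big)$. The plan is to couple the two systems on the realized service times of the real jobs, run an optimal policy $\pi^*:=\pi^*(s_i)$ in the $s_i$-system, and construct from it a feasible policy $\tilde\pi$ for the $s_g$-system that mimics $\pi^*$ exactly except that machine $n_g$ is released later (its replica occupies $n_g$ for $\sigma_g$ slots instead of $\sigma_i$), forcing $\tilde\pi$ to abandon a few jobs; since $J_t^*(s_g)\ge J_t^{\tilde\pi}(s_g)$, charging those abandoned jobs against $\rho_g$ finishes the proof.

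\noindent\emph{Construction of $\tilde\pi$.} On a fixed sample path, let $u_1<u_2<\cdots$ be the slots at which $\pi^*$ starts real jobs on $n_g$ after its replica frees it at slot $t+\sigma_i$; write $j_k$ for the job started at $u_k$ and $C_k=u_k+\sigma_{j_k}$ for its completion slot. Let $k_0$ be the number of these jobs with $u_k<t+\sigma_g$; if $\sigma_g\le\sigma_i$ then $k_0=0$. Define $\tilde\pi$ to copy $\pi^*$ verbatim on every machine other than $n_g$ --- feasible, since those machines start in identical states, the real jobs are coupled, and no job $j_1,j_2,\dots$ is run off $n_g$ --- to idle $n_g$ until $t+\sigma_g$, and then to abandon $j_1,\dots,j_{k_0}$ and follow $\pi^*$'s $n_g$-schedule on $j_{k_0+1},j_{k_0+2},\dots$ (appending the abandoned jobs at the very end for nonnegative extra reward). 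A short induction using the non-idling Lemma~\ref{lemma:freemach} shows that for each $\ell\ge1$ job $j_{k_0+\ell}$ is available and $n_g$ is free under $\tilde\pi$ at the slot $\pi^*$ starts it, so $\tilde\pi$ completes it at $C_{k_0+\ell}$; hence on this path $J_t^{\tilde\pi}(s_g)\ge J_t^*(s_i)-\sum_{k=1}^{k_0}w_{j_k}(C_k)$, i.e. $J_t^*(s_i)-J_t^*(s_g)\le\sum_{k=1}^{k_0}w_{j_k}(C_k)$.

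\noindent\emph{Charging against $\rho_g$.} Three estimates. (i) $C_k\ge t+\sigma_{j_k}$, so $w_{j_k}(C_k)\le w_{j_k}(t+\sigma_{j_k})$ by Assumption~\ref{as:reward}. (ii) The jobs $j_1,\dots,j_{k_0-1}$ occupy $n_g$ consecutively inside $[t+\sigma_i,t+\sigma_g)$, so $\sum_{k=1}^{k_0-1}\sigma_{j_k}<\sigma_g-\sigma_i$, while $\sigma_{j_{k_0}}\le\max_{j\in\Jscr_s}\sigma_j$; since on $\{\sigma_g\le\sigma_i\}$ the sum is empty and $\sigma_i\le\max_{j\in\Jscr_s}\sigma_j$ (as $i\in\Jscr_s$), we get $\sum_{k=1}^{k_0}\sigma_{j_k}\le\sigma_g-\sigma_i+\max_{j\in\Jscr_s}\sigma_j$ on every path, hence $E[\sum_{k=1}^{k_0}\sigma_{j_k}]\le E[\sigma_g]-E[\sigma_i]+E[\max_{j\in\Jscr_s}\sigma_j]$. (iii) Because $\pi^*$ is non-anticipating, when it commits job $j_k$ at slot $u_k$ the value $\sigma_{j_k}$ is still unrevealed and independent of the past (Assumption~\ref{as:sigma}); thus the partial sums $\sum_{k\le K}\big(w_{j_k}(t+\sigma_{j_k})-\rho_g\sigma_{j_k}\big)$ form a supermartingale, each increment having conditional mean $(\rho_{j_k}-\rho_g)E[\sigma_{j_k}]\le0$, and optional stopping at the bounded time $k_0$ gives $E[\sum_{k=1}^{k_0}w_{j_k}(t+\sigma_{j_k})]\le\rho_g\,E[\sum_{k=1}^{k_0}\sigma_{j_k}]$. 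Chaining (i)--(iii): $E[J_t^*(s_i)]-E[J_t^*(s_g)]\le\rho_g\big(E[\sigma_g]-E[\sigma_i]+E[\max_{j\in\Jscr_s}\sigma_j]\big)$.

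\noindent\emph{Main obstacle.} The construction and feasibility of $\tilde\pi$ are routine. The real work is estimate (iii): turning the expected reward of the \emph{random} abandoned set into $\rho_g$ times its expected total work. The supermartingale/optional-stopping argument is the clean route, but it hinges on pinning down the right filtration and coupling of the two replicas' service times so that the window contributes precisely $E[\sigma_g]-E[\sigma_i]$ and the single ``straddling'' job $j_{k_0}$ contributes only $E[\max_{j\in\Jscr_s}\sigma_j]$; this is exactly where Assumption~\ref{as:sigma} and $i\in\Jscr_s$ are used.
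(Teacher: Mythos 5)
Your proposal is correct and follows essentially the same route as the paper: couple the two systems on the real jobs' service times, build a feasible mimicking policy $\tilde\pi$ for the $s_g$-system from an optimal $\pi^*(s_i)$, identify the jobs that $\pi^*$ runs on $n_g$ inside the window $[t+\sigma_i, t+\sigma_g)$ as the cost of the delay, bound their total work by $\sigma_g - \sigma_i + \sigma_{\max}$, and convert their expected reward to $\rho_g$ times expected work using the greedy optimality of $g$. The one place you go beyond the paper is step (iii): the paper passes from $E[\sum_{j\in\Jscr_{sim}} w_j(T_j^*)]$ to $\rho_g\,E[\sum_{j\in\Jscr_{sim}}\sigma_j]$ by what amounts to termwise manipulation over the random set $\Jscr_{sim}$, without flagging that the set, the job identities, and the service times are all dependent; your supermartingale/optional-stopping argument (exploiting that $\sigma_{j_k}$ is unrevealed and independent of the past when $\pi^*$ commits $j_k$, per Assumption~\ref{as:sigma}) makes that exchange rigorous. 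You also correctly flag that $i\in\Jscr_s$ is needed to keep the pathwise bound nonnegative on $\{\sigma_g\le\sigma_i\}$; this implicitly requires coupling the $n_g$-replica's service time with the original job $i$'s (legitimate since $i$ has not started in state $s$), a point the paper glosses over by noting the replica ``need not'' share the original's realization.
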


\begin{proof}
We begin by coupling the systems such that they see the same realizations for service times. Note that the replicated jobs which currently occupy machine $n_g$ need not have the same service time of their original jobs, $i$ or $g$--despite having the same distribution.

Consider a policy $\tilde{\pi}$ for the $s_g$-system which mimics the $\pi^*(s_i)$ policy.  While processor $n_g$ is occupied by replica job $g$, which blocks processing of other jobs, the $s_g$-system will simulate the service time of jobs on processor $n_g$.  There are two possible cases, $\sigma_i \geq \sigma_g$ and $\sigma_i < \sigma_g$.

\begin{description}
  \item[Case 1, $\sigma_i \geq \sigma_g$:] the $\tilde{\pi}$ policy idles on machine $n_g$ until $t+\sigma_i$ (time which machine $n_g$ is free in the $s_i$-system).  At this point, the $s_g$-system is `synced' with the $s_i$-system and it proceeds with executing the optimal policy for the $s_i$ system, $\pi^*(s_i)$.  See Fig. \ref{fig:case1} for a single processor example of such a scenario.

\begin{figure}[htb]
\begin{center}
\psfrag{sisys}{$s_i$-system}
\psfrag{sgsys}{$s_g$-system}
\psfrag{si}{$\sigma_i$}
\psfrag{sg}{$\sigma_g$}
\psfrag{s2}{$\sigma_2$}
\psfrag{s3}{$\sigma_3$}
\psfrag{s4}{$\sigma_4$}
\psfrag{s5}{$\sigma_5$}
\psfrag{s6}{$\sigma_6$}
\psfrag{t}{$t$}\psfrag{tg}{$t+\sigma_g$}
\psfrag{t1}{$t+\sigma_i$}
\psfrag{t2}{$T_{s_i}$}
\includegraphics[scale=.8]{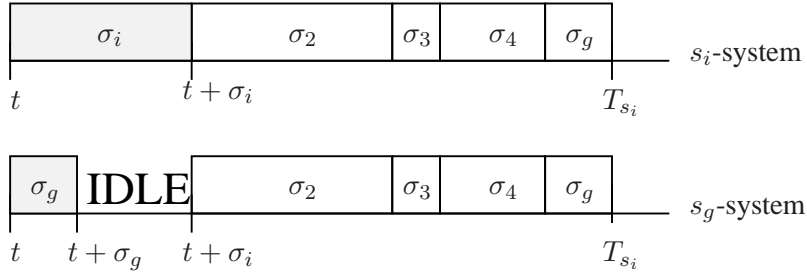}
\caption{Case 1, $\sigma_i \geq \sigma_g$.  The optimal policy is used for the $s_i$-system, which processes jobs in order $2,3,4,g$.  The $s_g$-system uses policy $\tilde{\pi}$ which mimics $\pi^*(s_i)$. Because job $i$ completes after job $g$, the $\tilde{\pi}$ policy idles.  Note that job $g$ is processed twice in the $s_g$ system because the first job is just a replica.  Job $i$ is only processed once in the $s_i$ system because even though $i$ is a replica, the original had already completed processing.  }\label{fig:case1}
\end{center}
\end{figure}

If $T_j^*(s_i)$ is the completion time of job $j$ in the $s_i$-system under  optimal policy $\pi^*(s_i)$,and $T_j$ is the completion time of job $j$ in the $s_g$-system under the $\tilde{\pi}$ policy, then $T_j = T_j^*(s_i)$.  Employing similar notation as before, we consider the reward-to-go on a single realized sample path of service times, given by $\sigma$ and the event $\sigma_i \geq \sigma_g$:
\begin{eqnarray}\label{eqn:gr1}
J_t^*(s_i|\sigma, \sigma_i \geq \sigma_g) & = & \sum_{j\in \Jscr_s} w_j(T_j^*(s_i))\\
& = &\sum_{j\in \Jscr_s} w_j(T_j)\nonumber \\
& = & J_t^{\tilde{\pi}}(s_g|\sigma,\sigma_i \geq \sigma_g) \nonumber \\
& \leq & J_t^*(s_g|\sigma,\sigma_i \geq \sigma_g) \nonumber \\
& \leq & J_t^*(s_g|\sigma,\sigma_i \geq \sigma_g) +  \frac{E[w_g(t+\sigma_g)|\sigma,\sigma_i < \sigma_g]}{E[\sigma_g|\sigma,\sigma_i < \sigma_g]} E[\sigma_g-\sigma_i+\sigma_{\max}|\sigma,\sigma_i < \sigma_g]  \nonumber
\end{eqnarray}

  \item[Case 2, $\sigma_i < \sigma_g$:] In this case, $\tilde{\pi}$ cannot exactly mimic $\pi^*(s_i)$ policy because  machine $n_g$ will continue to be busy after $i$ completes in the $s_i$-system. The $\tilde{\pi}$ policy will simulate the processing of jobs on $n_g$, while the machine is still busy. Let $\Jscr_{sim}$ denote the set of jobs whose processing is simulated.  Despite the fact that these simulated jobs will not actually be completed, the $\tilde{\pi}$ policy assumes they are.  The $\tilde{\pi}$ policy continues to follow the $\pi^*(s_i)$ policy until all jobs are `completed' in the sense that they are actually completed or their completion was simulated because processor $n_g$ was busy under the $s_g$-system when it was free under the $s_i$-system.  The $\tilde{\pi}$ policy then finishes processing the simulated jobs ($j \in \Jscr_{sim}$) in an arbitrary manner so that they are actually completed. That is, the actual completion of the simulated jobs is transferred to after the rest of the jobs have completed processing.  Fig. \ref{fig:case2} shows an example sample path of this scenario.

\begin{figure}[htb]
\begin{center}
\psfrag{sisys}{$s_i$-system}
\psfrag{sgsys}{$s_g$-system}
\psfrag{si}{$\sigma_i$}
\psfrag{sg}{$\sigma_g$}
\psfrag{s2}{$\sigma_3$}
\psfrag{s3}{$\sigma_4$}
\psfrag{s4}{$\sigma_5$}
\psfrag{s5}{$\sigma_2$}
\psfrag{t}{$t$}\psfrag{tg}{$t+\sigma_g$}
\psfrag{t1}{$t+\sigma_i$}
\psfrag{t3}{$\tau$}
\psfrag{sim}{simulated jobs}
\psfrag{t2}{$T_{s_i}$}
\psfrag{t4}{$T_{s_g}$}
\includegraphics[scale=.8]{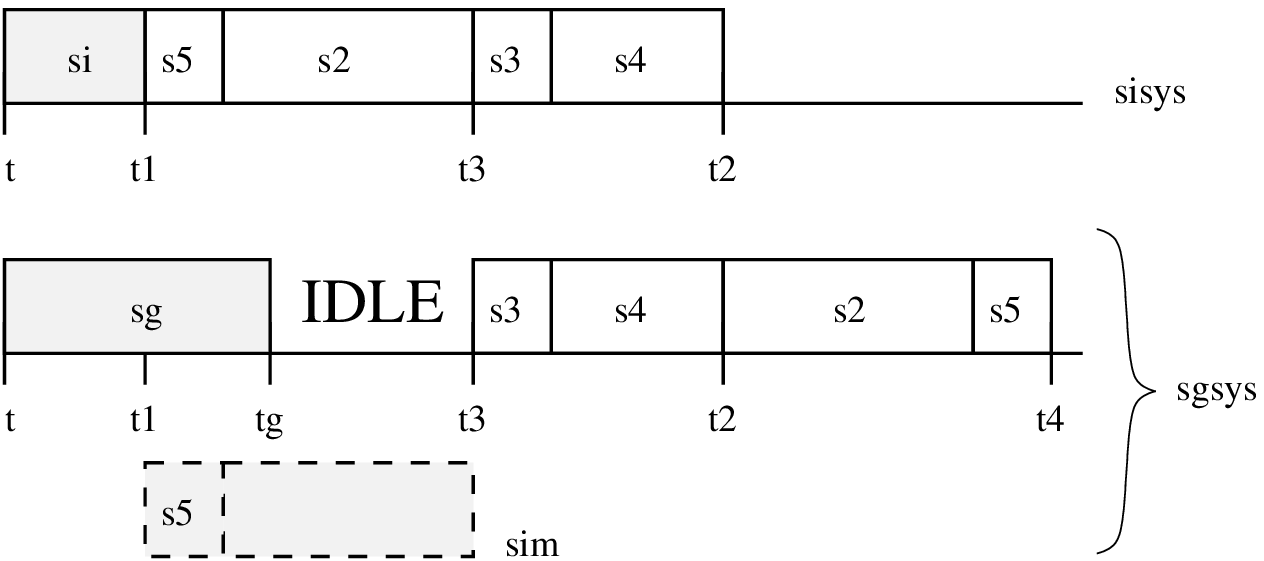}
\caption{Case 2, $\sigma_i < \sigma_g$.  The optimal policy is used for the $s_i$-system, which processes jobs in order $2,3,4$.  The $s_g$-system uses policy $\tilde{\pi}$ which mimics $\pi^*(s_i)$. Because job $i$ completes before job $g$, the $\tilde{\pi}$ policy is blocked until $t + \sigma_g$.  At time $t+\sigma_i$, the $\tilde{\pi}$ policy simulates the processing of jobs $2$ and $3$ on machine $n_g$.  The machine will idle once replica  job $g$ completes and before job $3$ finishes its simulated processing.  At time $\tau$, the $\tilde{\pi}$ policy is able to follow the $\pi^*(s_i)$ policy.  Then the simulated jobs $2$ and $3$ are completed in an arbitrary order after the $\pi^*(s_i)$ policy completes at time $T_{s_i}$. Note that job $g$ and $i$  are processed once in each system because the original jobs have already completed processing (the replicas are processed by time $t$).  }\label{fig:case2}
\end{center}
\end{figure}

      If $T_j^*(s_i)$ is the completion time of job $j$ in the $s_i$-system under
      optimal policy, $\pi^*(s_i)$, and $T_j$ is the completion time of job $j$ in the    $s_g$-system under the $\tilde{\pi}$ policy, then $T_j = T_j^*(s_i)$ for all $j\not \in \Jscr_{sim}$.  Then (again employing the notation given the filtration of $\sigma_j$ and the case $\sigma_i < \sigma_g$):
\begin{eqnarray}\label{eqn:gr_help}
J_t^*(s_i|\sigma,\sigma_i < \sigma_g)& = &  \sum_{j \in \Jscr_s} w_j(T_j^*(s_i))  \nonumber \\
 & = & \sum_{j \in \Jscr_{sim}} w_j(T_j^*(s_i)) + \sum_{j \not \in \Jscr_{sim}} w_j(T_j^*(s_i))  \nonumber \\
 & \leq & \sum_{j \in \Jscr_{sim}} w_j(T_j^*(s_i)) + \sum_{j \not \in \Jscr_{sim}} w_j(T_j^*(s_i)) + \sum_{j \in \Jscr_{sim}} w_j(T_j')  \nonumber \\
& = &\sum_{j \in \Jscr_{sim}} w_j(T_j^*(s_i)) + J_t^{\tilde{\pi}}(s_g|\sigma,\sigma_i < \sigma_g)\nonumber \\
& \leq &\sum_{j \in \Jscr_{sim}} w_j(T_j^*(s_i)) + J_t^*(s_g|\sigma,\sigma_i < \sigma_g)
\end{eqnarray}
The first inequality comes from the non-negativity of rewards.  The third equality comes from our coupling and the definition of the $\tilde{\pi}$ policy.  The last inequality comes from the optimality of $J^*_t$.

Taking expectations over the $\sigma_j$, or equivalently the $T^*_j(s_i)$, and using a little algebra for (\ref{eqn:gr_help}):
\begin{eqnarray}\label{eqn:gr2}
J_t^*(s_i|\sigma_i < \sigma_g)
& \leq &\sum_{j \in \Jscr_{sim}} E[w_j(T_j^*(s_i))|\sigma_i < \sigma_g] + J_t^*(s_g|\sigma_i < \sigma_g) \\
& \leq & \sum_{j \in \Jscr_{sim}} E[\sigma_j|\sigma_i < \sigma_g]\frac{E[w_j(t + \sigma_j)|\sigma_i < \sigma_g]}{E[\sigma_j|\sigma_i < \sigma_g]} + J_t^*(s_g|\sigma_i < \sigma_g) \nonumber \\
& \leq &  \max_k \frac{E[w_k(t+\sigma_k)|\sigma_i < \sigma_g]}{E[\sigma_k|\sigma_i < \sigma_g]}\sum_{j \in \Jscr_{sim}} E[\sigma_j|\sigma_i < \sigma_g] + J_t^*(s_g|\sigma_i < \sigma_g)\nonumber \\
& \leq &\frac{E[w_g(t+\sigma_g)|\sigma_i < \sigma_g]}{E[\sigma_g|\sigma_i < \sigma_g]} E[\sigma_g  + \sigma_l- \sigma_i|\sigma_i < \sigma_g]+  J_t^*(s_g|\sigma_i < \sigma_g)\nonumber \\
& \leq & \frac{E[w_g(t+\sigma_g)|\sigma_i < \sigma_g]}{E[\sigma_g|\sigma_i < \sigma_g]} E[\sigma_g-\sigma_i+\sigma_{\max}|\sigma_i < \sigma_g] + J_t^*(s_g|\sigma_i < \sigma_g)\nonumber
\end{eqnarray}
The second inequality comes from the fact that for all $j$, $T^*_j(s_i) \geq t + \sigma_j$ since the earliest time a job can begin processing is $t$ and all $w_j(t)$ are non-increasing in $t$ (Assumption \ref{as:reward}).  The forth inequality comes from the definition of job $g$. Now, consider the total service time of simulated jobs.  Simulated jobs begin at $t+\sigma_i$ and finish at $\tau > t+\sigma_g$.  In particular, there exists some $l$ such that the first time machine $n_g$ is free under policy $\tilde{\pi}$ is $\tau < t + \sigma_g + \sigma_l$, i.e. $l$ is the last simulated job (job $2$ in Fig. \ref{fig:case2}).  Hence the total service time of simulated jobs is bounded above by  $(t + \sigma_g + \sigma_l) - (t + \sigma_i)$.  This yields inequality four.
\end{description}

Combining (\ref{eqn:gr1}) and (\ref{eqn:gr2}), and taking expectations over $\sigma_i \geq \sigma_g$ and $\sigma_i < \sigma_g$  yields:
\begin{eqnarray}
J_t^*(s_i)& \leq &\frac{E[w_g(t+\sigma_g)]}{E[\sigma_g]}\Big( E[\sigma_g]-E[\sigma_i]+E[\sigma_{\max}]\Big ) + J_t^*(s_g)\nonumber \\
& = &\Big(1 - \frac{E[\sigma_i]}{E[\sigma_g]} + \frac{E[\sigma_{\max}]}{E[\sigma_i]}\Big)E[w_g(t+\sigma_g)]  + J_t^*(s_g)\nonumber
\end{eqnarray}
which concludes the proof.
\end{proof}

Suppose we were able to process a job without using a machine.  The total reward gained by the use of this `virtual machine' is greater than the reward gained without the use of it.  Define $S': \Sscr \times \Jscr \rightarrow \Sscr$ as the operation/function which reduces state $s$ to state $s_i' = S'(s,i)$ by removing job $i$ which has not yet begun processing in state $s$.  That is, starting in state $s$, select a job $i$ that has not been completed.  Complete job $i$ and generate its associated reward without tying up a processor. Said in notation, $\forall n: z(s_i')_n = z(s)_n$; and $\forall j \not = i$: $x(s_i')_j = x(s)_j$ and $y(s_i')_j = y(s)_j$, but $x (s_i')_i= (x(s)_i - 1)^+$ and $y(s_i')_i \not = \emptyset$ .
\begin{lemma}(Virtual Machine Rewards)\label{lemma:vm_rewards}
For all states $s$ and any job $i$, let state $S'(s,i)$ denote the resulting state if job $i$ were processed without occupying a processor.  Also, reward $w_i$ is generated upon completion. Then:
$$J_t^*(s) \leq E[w_i(t+\sigma_i)] + J_t^*(S'(s,i)),$$
\end{lemma}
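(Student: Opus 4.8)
The plan is to use a coupling-and-mimicking argument of the same type employed in the proofs of Lemmas~\ref{lemma:mono_job} and~\ref{lemma:scaled_revenue}. Per the definition of $S'$, job $i$ has not yet begun processing in $s$; if instead $x(s)_i = 0$ then $S'(s,i) = s$ and the claim holds trivially since $w_i \ge 0$, so assume $y(s)_i = \emptyset$. Couple the $s$-system and the $S'(s,i)$-system so that they experience the same realizations $\sigma = (\sigma_1,\dots,\sigma_J)$ of the service requirements (and residual service times); this is legitimate because the $\sigma_j$ are independent across jobs (Assumption~\ref{as:sigma}).

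First I would run the optimal policy $\pi^*(s)$ in the $s$-system and, on each sample path, record the completion time $T_j$ of each job $j \in \Jscr_s$, so that $J_t^*(s\mid\sigma) = \sum_{j\in\Jscr_s} w_j(T_j)$. Since job $i$ has not started by time $t$, under $\pi^*(s)$ it begins at some slot $t_i \ge t$ and hence completes at $T_i = t_i + \sigma_i \ge t + \sigma_i$; by monotonicity of the rewards (Assumption~\ref{as:reward}), $w_i(T_i) \le w_i(t+\sigma_i)$ pointwise on every sample path.

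Next I would construct a policy $\tilde\pi$ for the $S'(s,i)$-system that mimics $\pi^*(s)$ exactly. Internally $\tilde\pi$ maintains a \emph{simulated} copy of job $i$ with service time $\sigma_i$ (coupled to the real one), and whenever $\pi^*(s)$ would assign job $i$ to a processor, $\tilde\pi$ leaves that processor idle for the simulated duration; on every other job $\tilde\pi$ copies the assignment of $\pi^*(s)$ verbatim. This is feasible because deleting a not-yet-started job only relaxes occupancy constraints: any processor that $\pi^*(s)$ finds free at a given slot is also free under $\tilde\pi$, and any job $j \ne i$ scheduled by $\pi^*(s)$ has likewise not yet been started under $\tilde\pi$. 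Hence $\tilde\pi$ reproduces the completion time $T_j$ for every $j \ne i$, and on each coupled sample path
$$J_t^{\tilde\pi}(S'(s,i)\mid\sigma) = \sum_{j\in\Jscr_s\setminus\{i\}} w_j(T_j) = J_t^*(s\mid\sigma) - w_i(T_i) \ \ge\ J_t^*(s\mid\sigma) - w_i(t+\sigma_i).$$
Taking expectations over $\sigma$ and invoking the optimality of $J_t^*$ at state $S'(s,i)$ gives $J_t^*(S'(s,i)) \ge J_t^{\tilde\pi}(S'(s,i)) \ge J_t^*(s) - E[w_i(t+\sigma_i)]$, which is the desired inequality.

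The step I expect to require the most care is the construction of $\tilde\pi$: one must verify that copying the decisions of $\pi^*(s)$ is always feasible in the reduced system (this is exactly where one uses that $i$ had not begun processing, so removing it only frees resources) and that $\tilde\pi$ is still a legitimate non-anticipating policy despite tracking a simulated job --- it is, since the simulated service time is auxiliary randomness it generates itself, just as in Case~2 of the proof of Lemma~\ref{lemma:scaled_revenue}. It is also worth stressing why the hypothesis that $i$ has not started cannot be dropped: a mid-processing job could complete strictly before $t+\sigma_i$, the bound $w_i(T_i) \le w_i(t+\sigma_i)$ would reverse, and the lemma as stated would fail.
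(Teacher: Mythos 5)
Your proposal is correct and takes essentially the same route as the paper's proof: couple the two systems, have the $S'(s,i)$-system mimic $\pi^*(s)$ with a simulated (idling) copy of job $i$, observe that $T_i \ge t+\sigma_i$ so $w_i(T_i)\le w_i(t+\sigma_i)$ by Assumption~\ref{as:reward}, and close with optimality of $J_t^*$ at $S'(s,i)$. The only differences are cosmetic additions of rigor (the trivial $x(s)_i=0$ case, the explicit feasibility check, and the remark on why $i$ must not have started), none of which alters the underlying argument.
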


\begin{proof}
Consider a coupling of the systems starting in state $s$ and $s'_i = S'(s,i)$ such that they see the same realizations of the service times for all jobs.  Let $\pi^*(s)$ denote the optimal scheduling policy starting from state $s$.

In the $s_i'$-system, we call job $i$ a `fictitious' job.  It is fictitious because it does not actually exist (it has already completed) under the $s_i'$-system.  Consider a policy $\tilde{\pi}$ which assumes that job $i$ is a `real' (available/not processed) job and executes the optimal policy under this assumption, i.e. it at time slot $t$, it assumes it is in state $s$ (rather than $s_i'$) and executes the optimal policy $\pi_t^*(s)$.  When $\tilde{\pi}$ schedules job $i$, there is no job to actually process, so the processor will idle while it simulates the processing time for job $i$ which is identically distributed to $\sigma_i$ under the $s$-system.  See Fig. \ref{fig:vmachine} for a single machine example of the $\tilde{\pi}$ and $\pi^*(s)$ policies given a sample path for service time realizations.

\begin{figure}[htb]
\begin{center}
\psfrag{sisys}{$s$-system}
\psfrag{vm}{virtual machine}
\psfrag{sgsys}{$s'_i$-system}
\psfrag{si}{$\sigma_1$}
\psfrag{s2}{$\sigma_2$}
\psfrag{s3}{$\sigma_3$}
\psfrag{s4}{$\sigma_4$}
\psfrag{s5}{$\sigma_5$}
\psfrag{s6}{$\sigma_6$}
\psfrag{t}{$t$}\psfrag{t2}{$T_{s'}$}
\includegraphics[scale=.8]{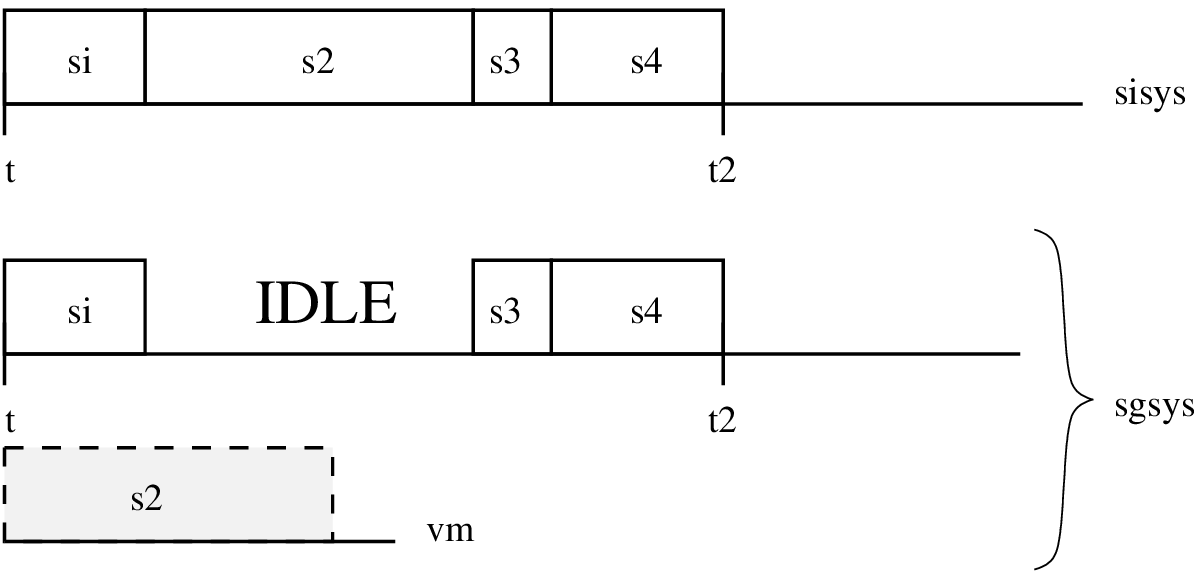}
\caption{Virtual machine: A single server scenario.  Under the $s'_i$-system, job $2$ is processed on a virtual machine.  The $s$-system uses policy $\pi(s')$ to optimally process all jobs $j = 1,2,3,4$. The $s_i'$-system uses policy $\tilde{\pi}$ which mimics $\pi^*(s)$. Because job $j = 2$ has already been processed on the `virtual machine', the $\tilde{\pi}$ policy idles.  }\label{fig:vmachine}
\end{center}
\end{figure}

Let $T_j$ be the completion time of job $j$ under the $\tilde{\pi}$ policy.  Note that $T_i$ is the completion time of the fictitious job, $i$. Let $t_i$ denote the random time which job $i$ begins `processing' under this policy. Under our coupling, $T_j$ is precisely  $t_j$ plus the processing time of job $j$ under $\pi^*(s)$ for the $s$-system. Hence,
\begin{eqnarray}
J_t^*(s|\sigma_j) & = & \sum_{j} w_j(T_j) \nonumber\\
& = & \sum_{j\not = i} w_j(T_j) + w_i(T_i) \nonumber \\
& = &  J_t^{\tilde{\pi}}(s'_i|\sigma_j)  + w_i(t_i + \sigma_i) \nonumber\\
& \leq &  J_t^*(s'_i|\sigma_j)  + w_i(t + \sigma_i) \nonumber
\end{eqnarray}
The inequality results from the non-increasing property of the reward functions in Assumption \ref{as:reward} and from the optimality of $J_t^*(\cdot)$. Taking expectations over $\sigma_j$ yields the desired result.
\end{proof}

We are now in position to prove the main result of this paper. Let $\Delta = \frac{E[\max_j \sigma_j]}{\min_j E[\sigma_j]}$ as in (\ref{eq:delta}).
\begin{theorem}\label{theorem:bound}
For all states $s \in \Sscr$, the following performance guarantee for the greedy policy holds:
$$J_t^*(s) \leq (2+\Delta)J_t^g(s).$$
\end{theorem}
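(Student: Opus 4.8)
The plan is to bound $J_t^*(s)$ by comparing the optimal policy against the greedy policy through an inductive argument on the number of unstarted jobs, using the three structural lemmas (Monotonicity in Jobs, Greedy Revenue, Virtual Machine Rewards) to account for the ``cost'' of the first greedy scheduling decision. First I would set up the induction: the base case is the state with no jobs remaining, where $J_t^*(s) = J_t^g(s) = 0$. For the inductive step, consider a state $s$ with at least one unstarted job; if there are no free machines, then the greedy and optimal policies both wait for a machine to free up (recall by Lemma~\ref{lemma:freemach} the optimal policy is non-idling, and when no machine is free neither policy acts), so we may reduce to a later state with the same job set and fewer jobs in mid-processing, or simply proceed to the case where a machine is free.

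Assume a machine is free and let $g$ be the greedy job, i.e. $g = \argmax_{j \in \Jscr_s} \frac{E[w_j(t+\sigma_j)]}{E[\sigma_j]}$. The key idea is to decompose the optimal reward-to-go into (i) the reward the greedy policy collects by scheduling $g$ now, plus (ii) a remaining-reward term that can be compared to a smaller instance. Concretely, I would first apply Lemma~\ref{lemma:vm_rewards} to ``peel off'' whatever job $i^*$ the optimal policy schedules on the free machine $n_g$ at time $t$: this gives
$$J_t^*(s) \leq E[w_{i^*}(t+\sigma_{i^*})] + J_t^*(S'(s,i^*)) = E[w_{i^*}(t+\sigma_{i^*})] + E[J_t^*(s_{i^*})],$$
where $s_{i^*}$ is the state with a replica of $i^*$ tying up machine $n_g$. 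Then Lemma~\ref{lemma:scaled_revenue} (Greedy Revenue) converts the replica of $i^*$ into a replica of $g$ at the cost of the greedy reward rate scaled by the job-size spread:
$$E[J_t^*(s_{i^*})] \leq \Big(1 - \tfrac{E[\sigma_{i^*}]}{E[\sigma_g]} + \tfrac{E[\sigma_{\max}]}{E[\sigma_g]}\Big) E[w_g(t+\sigma_g)] + E[J_t^*(s_g)].$$
Now in state $s_g$ a replica of $g$ occupies $n_g$; but the \emph{actual} job $g$ is also still present and unstarted, so having the greedy policy schedule $g$ on $n_g$ matches this configuration (up to the fact that the replica generates no reward). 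This is where the greedy reward $E[w_g(t+\sigma_g)] \le J_t^g(s)$ enters, and where Lemma~\ref{lemma:mono_job} (Monotonicity in Jobs) lets me drop down to a state with one fewer unstarted job so the induction hypothesis applies to the tail.

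The bookkeeping I expect to be the main obstacle is twofold. First, the coefficient: I need $E[w_{i^*}(t+\sigma_{i^*})] \le E[w_g(t+\sigma_g)] \cdot \frac{E[\sigma_{i^*}]}{E[\sigma_g]}$ (which holds by definition of $g$ as the maximizer of the reward rate), so that summing the two coefficients from Lemma~\ref{lemma:vm_rewards} and Lemma~\ref{lemma:scaled_revenue} telescopes to $\big(1 + \frac{E[\sigma_{\max}]}{E[\sigma_g]}\big)E[w_g(t+\sigma_g)] \le \big(1+\Delta\big)E[w_g(t+\sigma_g)]$, using $E[\sigma_g] \ge \min_j E[\sigma_j]$ and hence $\frac{E[\sigma_{\max}]}{E[\sigma_g]} \le \Delta$. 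Adding the greedy step's own reward $E[w_g(t+\sigma_g)] \le J_t^g(s)$ and the inductive tail contributes the extra $+1$ and the recursive $(2+\Delta)$ factor, giving $J_t^*(s) \le (1+\Delta)J_t^g(s) + J_t^g(s) + (2+\Delta)\cdot(\text{tail of }J^g) $ which must be shown to collapse to $(2+\Delta)J_t^g(s)$ overall; I would carry the induction on a suitably defined potential (total over all remaining greedy decisions) to make this clean. Second, the multi-processor case requires applying this peeling argument simultaneously to all $F = \min\{K,F\}$ free machines — I would handle them one machine at a time, peeling off each optimally-scheduled job and replacing it with the corresponding greedy job, which is legitimate because after each peel the reduced state still satisfies the hypotheses of the lemmas. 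Care is needed that the ``greedy job'' for each successive machine is correctly indexed (the second-highest reward rate, etc.), but since the greedy policy picks the top $F$ jobs by reward rate, matching the $k$-th peeled optimal job against the $k$-th greedy job and invoking Lemma~\ref{lemma:scaled_revenue} goes through.
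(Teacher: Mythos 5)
Your overall skeleton (induction on unstarted jobs, non-idling, pairing the $k$-th optimal job with the $k$-th greedy job, converting the optimal's machine occupancy into the greedy's via Lemma~\ref{lemma:scaled_revenue}, then peeling the greedy jobs off to apply the induction hypothesis) matches the paper's structure. However, you have the roles of Lemma~\ref{lemma:vm_rewards} and Lemma~\ref{lemma:mono_job} essentially swapped, and the resulting first step is not valid.

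The specific gap is the claimed identity $J_t^*(S'(s,i^*)) = E[J_t^*(s_{i^*})]$. These are genuinely different states: in $S'(s,i^*)$ job $i^*$ has been removed \emph{and} machine $n_g$ remains free, whereas in $s_{i^*}$ job $i^*$ is still available to earn reward \emph{but} machine $n_g$ is tied up by a non-rewarding replica. Neither dominates the other, and the direction you need ($J_t^*(S'(s,i^*)) \le E[J_t^*(s_{i^*})]$) can fail: with two jobs $i^*,g$ on one machine and $w_g$ decaying sharply, $J_t^*(S'(s,i^*)) = E[w_g(t+\sigma_g)]$ can exceed $E[J_t^*(s_{i^*})]$, since in $s_{i^*}$ the machine is blocked for $\sigma_{i^*}$ slots and the remaining rewards may have decayed away. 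The correct way to reach $s_{i^*}$ is the paper's: write the Bellman step $J_t^*(s) = \sum_{j\in\Jscr_*}E[w_j(t+\sigma_j)] + E[J_t^*(\tilde{S}(s,A_*))]$, which ties up the machine for real, and then apply Lemma~\ref{lemma:mono_job} to ``put back'' the optimal jobs so that $\tilde{S}(s,A_*)$ is replaced by $\hat{S}(s,A_*) = s_{i^*}$ — here monotonicity goes the \emph{right} way, since adding jobs can only increase the optimal value. Lemma~\ref{lemma:vm_rewards} is not the right tool for this step because it leaves the processor free, which overstates what the optimal policy could do.

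A second, related reversal: you say Lemma~\ref{lemma:mono_job} ``lets me drop down to a state with one fewer unstarted job.'' Monotonicity only gives $J_t^*(\text{more jobs}) \ge J_t^*(\text{fewer jobs})$, so removing a job via Lemma~\ref{lemma:mono_job} produces a \emph{lower} bound, which is useless in an inequality chain that must bound $J_t^*$ from above. Removing the greedy jobs to set up the induction must be paid for, and the correct tool is Lemma~\ref{lemma:vm_rewards}: $E[J_t^*(\hat{S}(s,A_g))] \le \sum_{g\in\Jscr_g}E[w_g(t+\sigma_g)] + E[J_t^*(\tilde{S}(s,A_g))]$, which charges the greedy rewards once more and lands at the post-greedy state. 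With these two swaps corrected (Bellman $+$ Lemma~\ref{lemma:mono_job} to enter, Lemma~\ref{lemma:scaled_revenue} to swap replicas, Lemma~\ref{lemma:vm_rewards} to exit), the coefficient accounting does collapse cleanly to $(2+\Delta)\big(\sum_{g\in\Jscr_g}E[w_g(t+\sigma_g)] + E[J_t^g(\tilde{S}(s,A_g))]\big) = (2+\Delta)J_t^g(s)$; your expression $(1+\Delta)J_t^g(s) + J_t^g(s) + (2+\Delta)\cdot(\text{tail})$ double-counts and does not telescope.
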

\begin{proof}
The proof proceeds by induction on the number of jobs remaining to be processed, $\sum_{j\in \Jscr} \1_{\{y_j = \emptyset\}}$.  The claim is trivially true if there is only one job remaining to be processed--the greedy and optimal policies will coincide. Now consider a state $s$ such that $\sum_j \1_{\{y(s)_j = \emptyset\}} = K$, and assume that the claim is true for all states $s'$ with $K > \sum_j \1_{\{y(s')_j = \emptyset\}}$.

Now if $\pi_t^*(s) = \pi_t^g(s)$ the then the next state encountered and rewards generated in both systems are
identically distributed so that the induction hypothesis immediately
yields the result for state $s$.

Consider the case where $\pi_t^*(s) \neq \pi_t^g(s)$. Denote by $\Jscr_*$ and $\Jscr_g$ the set of jobs processed by the optimal and greedy policies in state $s$.  Note that these sets depend on the current time slot $t$ and the state $s$; however, we suppress them for notational compactness.   Recall that, by Lemma \ref{lemma:freemach}, $|\Jscr_*| = |\Jscr_g|$. Let $A_*$ and $A_g$ denote the optimal and greedy scheduling policy, respectively, given state $s$ in time slot $t$.

Taking definitions from before, we define $\tilde{S}(s,A)$ as the random next state encountered given that we start in state $s$ and action $A$ is taken.  Also, define $S'(s,i)$ as state $s$ with the completion of job $i$, i.e. job $i$ is completed ($x_i = 0$) without using a processor.

Define the operator $\hat{S} : \Sscr \times \Ascr \rightarrow \Sscr$ which transforms state $s$ by tying up machines with replicated the jobs defined by $A$.  That is, $\hat{s} = \hat{S}(s,A)$ is the state where jobs begin processing on the machines given by $A$, but no reward is generated for their completion and they remain to be processed at a later time (reward is generate upon this second completion).  This second completion may occur prior or following the completion of the replicated job. $A$ defines which jobs are replicated and which machine they are processed on, and hence occupy--replicated jobs do not generated any reward. Put another way, $\hat{s}$ is a new state where machines are occupied for an amount of time defined by the service times of jobs in $A$.  Said in notation, $x(\hat{S}(s,A))_j = x(s)_j$ and $y(\hat{S}(s,A))_j = y(s)_j$ for all $j$,  while $z(\hat{S}(s,A))_n = j$  if $(j,n) \in A$ and $z(\hat{S}(s,A))_n = y(s)_n$ otherwise.

We have:
\begin{eqnarray}
\label{eq:th1}
J_t^*(s) & = & \sum_{j\in\Jscr_*}E[w_j(t+\sigma_j)] + E[J_t^*(\tilde{S}(s,A_*))] \nonumber \\  & \leq & \sum_{(i,g)\in (\Jscr_*,\Jscr_g)}\frac{E[\sigma_i]}{E[\sigma_g]}E[w_g(t+\sigma_g)] + E[J_t^*(\tilde{S}(s,A_*))]\nonumber \\
& \leq & \sum_{(i,g)\in (\Jscr_*,\Jscr_g)}\frac{E[\sigma_i]}{E[\sigma_g]}E[w_g(t+\sigma_g)] +  + E[J_t^*(\hat{S}(s,A_*))]
\end{eqnarray}
The first inequality comes from the definition of the greedy policy; the reward rate for greedy jobs is higher than for the optimal jobs.  The second inequality comes from Lemma \ref{lemma:mono_job} by putting back the jobs in $A_*$.  That is the machines are  occupied by replicas of jobs defined in $A_*$, but the original jobs are placed back to be completed at a later date.  These additional jobs generate more reward as shown in Lemma \ref{lemma:mono_job}.

Continuing (\ref{eq:th1}), we switch $A_*$ with $A_g$.  That is, instead of tying up the machines with replicas of the optimal jobs, they are typed up with replicas of the greedy jobs.  Because $|\Jscr_*| = |\Jscr_g|$ and the processing times on each machines are identical, we can consider each machine individually and use Lemma \ref{lemma:scaled_revenue} so that,
\begin{eqnarray}\label{eq:th2}
\sum_{(i,g)\in (\Jscr_*,\Jscr_g)}\frac{E[\sigma_i]}{E[\sigma_g]}E[w_g(t+\sigma_g)] &+& E[J_t^*(\hat{S}(s,A_*))]\nonumber\\
 & \leq & \sum_{(i,g)\in (\Jscr_*,\Jscr_g)}\frac{E[\sigma_i]}{E[\sigma_g]}E[w_g(t+\sigma_g)] +  E[J_t^*(\hat{S}(s,A_g))] \nonumber\\
&&\sum_{(i,g)\in (\Jscr_*,\Jscr_g)} E[w_g(t+\sigma_g)]\Big(1 - \frac{E[\sigma_i]}{E[\sigma_g]} + \frac{E[\sigma_{\max}]}{E[\sigma_g]}\Big) +  \nonumber \\
& = & \sum_{g\in\Jscr_g} E[w_g(t+\sigma_g)]\Big(1 + \frac{E[\sigma_{\max}]}{E[\sigma_g]}\Big) +  E[J_t^*(\hat{S}(s,A_g))]
\end{eqnarray}

Continuing (\ref{eq:th2}), we now complete the greedy jobs without occupying any machines:
\begin{eqnarray}\label{eq:th3}
\sum_{j\in\Jscr_g} E[w_g(t+\sigma_g)]\Big(1 + \frac{E[\sigma_{\max}]}{E[\sigma_g]}\Big) &+  & E[J_t^*(\hat{S}(s,A_g))]\nonumber \\
& \leq & \sum_{g\in\Jscr_g} E[w_g(t+\sigma_g)]\Big(2 +\frac{E[\sigma_{\max}]}{E[\sigma_g]}\Big) +  E[J_t^*(\tilde{S}(s,A_g))]  \nonumber \\
& \leq & \sum_{g\in\Jscr_g}\Big(2 + \frac{E[\sigma_{\max}]}{\min_kE[\sigma_k]}\Big) E[w_g(t+\sigma_g)] + \nonumber \\
 &&\Big(2 +  \frac{E[\sigma_{\max}]}{\min_kE[\sigma_k]}\Big)E[J_t^g(\tilde{S}(s,A_g))]  \nonumber \\
& =& \Big(2+ \Delta\Big)J_t^g(s)\nonumber
\end{eqnarray}
The first inequality comes from use of `virtual machines' for the greedy jobs under Lemma \ref{lemma:vm_rewards}.  The second inequality comes from the induction hypothesis.
This concludes the proof.
\end{proof}

\section{Special Cases}\label{sec:special}
As shown in \cite{gittins,walrand} the greedy policy is optimal, for linear or exponential decaying reward functions.  Under a few other special cases, the bound in Theorem \ref{theorem:bound} can be improved.

\subsection{Identical Processing Times}
Suppose that all job service times are independent and identically distributed, i.e. in the case of Geometric service times, $p_j = p$ for all $j$.  In general, there is no closed form equation for $E[\sigma_{\max}]$; however, in this case, the bound can be improved to a factor of $2$.  To do this, Lemma \ref{lemma:scaled_revenue} needs to be modified.

\begin{lemma}(Greedy Revenue, I.I.D processing times)\label{lemma:scaled_revenue_identical}
Consider a state $s_t = s$ in time slot $t$ and let $g$ denote the index of a greedy job, i.e. $g = \argmax_{j\in \Jscr_s} \frac{E[w_j(t(s)+\sigma_j)]}{E[\sigma_j]}$ for all jobs which are mid-processing or have not started ($\Jscr_s = \{k \in \Jscr|x_k(s) = 1\} $).  Denote by $s_g$ and $s_i$ two states which are related to state $s$ as follows:  $x(s_i)_j = x(s_g)_j = x(s)_j$ and $y(s_i)_j = y(s_g)_j = y(s)_j$ for all $j$, $z(s_i)_n = z(s_g)_n = z(s)_n$ for all $n \not = n_g$, and $z (s_g)_{n_g} = g$ while $z(s_i)_{n_g} = i$ for some arbitrary job index $i$ and machine $n_g$.  That is in state $s_i$, machine $n_g$ is occupied by a replica of job $g$; and in state $s_i$, machine $n_g$ is occupied by a replica of job $i$. Then,
$$E[J_t^*(s_i)] = E[J_t^*(s_g)]$$
\end{lemma}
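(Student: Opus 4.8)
The plan is to prove the stronger statement that $J_t^*(s_i)$ and $J_t^*(s_g)$ agree \emph{along every sample path}, not merely in expectation, by exhibiting a coupling under which the two systems are indistinguishable. The crucial observation is that the replica occupying machine $n_g$ contributes nothing to either system except the length of time it keeps $n_g$ busy: by construction its completion generates no reward and does not interact with the original copy of job $i$ (resp. job $g$). Under Assumption \ref{as:sigma} together with the i.i.d. hypothesis, the service time of a replica of job $i$ and the service time of a replica of job $g$ have the \emph{same} distribution, namely the common law of the $\sigma_j$. Thus the only data distinguishing $s_i$ from $s_g$ — the residual occupancy of $n_g$ — is statistically identical in the two states.

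First I would set up the coupling: draw a single random variable $\rho$ from the common service-time distribution and use it as the service time of the replica on $n_g$ in \emph{both} the $s_i$-system and the $s_g$-system; simultaneously couple all remaining jobs $j\in\Jscr_s$ so that they see identical realizations $\sigma_j$ in the two systems (legitimate since the two systems contain the same set of real jobs with the same joint law). Under this coupling the two systems are literally the same Markov decision process: the same real jobs with the same residual-service-time distributions, the same processor configuration, and machine $n_g$ becoming free at exactly time $t+\rho$ in both. Hence $\pi^*(s_i)$ is feasible in the $s_g$-system and vice versa, and along every sample path each yields the identical sequence of completion times and therefore the identical reward, so $J_t^*(s_i\mid\sigma,\rho)=J_t^*(s_g\mid\sigma,\rho)$. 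Taking expectations over $\rho$ and the $\sigma_j$ gives $E[J_t^*(s_i)]=E[J_t^*(s_g)]$.

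There is essentially no obstacle here beyond making the coupling precise; the point worth emphasizing is \emph{why} this sharpens Lemma \ref{lemma:scaled_revenue}. That lemma only bounds the difference by the slack term $\bigl(1-\tfrac{E[\sigma_i]}{E[\sigma_g]}+\tfrac{E[\sigma_{\max}]}{E[\sigma_g]}\bigr)E[w_g(t+\sigma_g)]$, which arises because a replica of the (possibly long) job $i$ may delay a greedy replica differently; but when the $\sigma_j$ are identically distributed the penalty $1-E[\sigma_i]/E[\sigma_g]$ vanishes and, more importantly, the replica of $i$ and the replica of $g$ become statistically interchangeable, so no residual $E[\sigma_{\max}]$ term survives. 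With this lemma in hand one re-runs the induction in the proof of Theorem \ref{theorem:bound}: the step that invoked Lemma \ref{lemma:scaled_revenue} (equation (\ref{eq:th2})) now contributes no extra reward, which collapses the guarantee from $2+\Delta$ to $2$.
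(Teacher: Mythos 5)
Your proof is correct and takes essentially the same approach as the paper: both couple the replica's occupancy time on machine $n_g$ across the two systems (legitimate precisely because of the i.i.d.\ hypothesis), observe that the replicas carry no reward and do not interact with the real jobs, and conclude that the two states are pathwise indistinguishable, hence the optimal values coincide. Your write-up is a bit more explicit about the single shared random variable $\rho$ and about why the slack in Lemma \ref{lemma:scaled_revenue} vanishes, but the argument is the same.
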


\begin{proof}
Couple the systems such that they see the same realizations for service times of job $i$ and job $g$ which are currently occupying machine $n_g$.  This coupling is possible since the jobs are i.i.d.   Therefore, under this coupling there is no difference between state $s_i$ and $s_g$ since these `jobs' are only occupying the machine but are not generating any rewards. Hence, $E[J_t^*(s_i)] = E[J_t^*(s_g)]$.
\end{proof}

Now we are able to prove an improved bound on the performance of the greedy policy.
\begin{theorem}\label{theorem:bound_up}
Let the service time for job $j$ be distributed according to density function $f_j(\sigma)$.  If all job service times are independent and identically distributed according to $f(\sigma)$, i.e. $f_j(\sigma) = f(\sigma)$ $\forall j$, then for all states $s \in \Sscr$, the greedy policy is guaranteed to be within a factor of $2$ of optimal:
$$J_t^*(s) \leq 2J_t^g(s).$$
\end{theorem}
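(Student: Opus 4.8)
The plan is to mirror the inductive argument used to prove Theorem \ref{theorem:bound}, substituting the sharper i.i.d.\ estimate of Lemma \ref{lemma:scaled_revenue_identical} for Lemma \ref{lemma:scaled_revenue}. The induction is again on the number of unstarted jobs $\sum_{j\in\Jscr}\1_{\{y(s)_j=\emptyset\}}$; the base case (a single remaining job, where greedy and optimal coincide) and the case $\pi_t^*(s)=\pi_t^g(s)$ (where the induction hypothesis applies directly to the identically distributed successor states) are handled exactly as in the proof of Theorem \ref{theorem:bound}, so the work is confined to the case $\pi_t^*(s)\neq\pi_t^g(s)$. Write $\Jscr_*$ and $\Jscr_g$ for the sets of jobs started by the optimal and greedy policies at $s$; by Lemma \ref{lemma:freemach} these sets have equal cardinality, so we may fix a bijection pairing each $i\in\Jscr_*$ with a greedy job $g\in\Jscr_g$ of at least as high reward rate.

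Starting from $J_t^*(s)=\sum_{j\in\Jscr_*}E[w_j(t+\sigma_j)]+E[J_t^*(\tilde S(s,A_*))]$, I would proceed in four steps: (i) bound each $E[w_i(t+\sigma_i)]$ by $\frac{E[\sigma_i]}{E[\sigma_g]}E[w_g(t+\sigma_g)]$ from the definition of the greedy policy, which under i.i.d.\ service times collapses to $E[w_i(t+\sigma_i)]\le E[w_g(t+\sigma_g)]$ since $E[\sigma_i]=E[\sigma_g]$; (ii) replace $\tilde S(s,A_*)$ by $\hat S(s,A_*)$ using Lemma \ref{lemma:mono_job}, since in $\hat S(s,A_*)$ the machines are tied up by replicas for the same durations but the original optimal jobs are returned to the buffer; (iii) swap the optimal replicas for greedy replicas one machine at a time, invoking Lemma \ref{lemma:scaled_revenue_identical} on each paired $(i,g)$ to obtain $E[J_t^*(\hat S(s,A_*))]=E[J_t^*(\hat S(s,A_g))]$ with \emph{no} additive penalty (this is where the $E[\sigma_{\max}]/E[\sigma_g]$ term of Theorem \ref{theorem:bound} disappears); and (iv) complete the greedy jobs on virtual machines via Lemma \ref{lemma:vm_rewards}, which converts $\hat S(s,A_g)$ into a state distributed as $\tilde S(s,A_g)$ at the cost of one further $\sum_{g\in\Jscr_g}E[w_g(t+\sigma_g)]$. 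Collecting terms yields
\begin{equation}
J_t^*(s)\;\le\;2\sum_{g\in\Jscr_g}E[w_g(t+\sigma_g)]+E[J_t^*(\tilde S(s,A_g))].
\end{equation}
Since $\tilde S(s,A_g)$ has strictly fewer unstarted jobs, the induction hypothesis gives $E[J_t^*(\tilde S(s,A_g))]\le 2E[J_t^g(\tilde S(s,A_g))]$, and because $J_t^g(s)=\sum_{g\in\Jscr_g}E[w_g(t+\sigma_g)]+E[J_t^g(\tilde S(s,A_g))]$ we conclude $J_t^*(s)\le 2J_t^g(s)$.

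The step I expect to require the most care is (iii): one must verify that the machine-by-machine swap keeps every intermediate state within the hypotheses of Lemma \ref{lemma:scaled_revenue_identical}. This holds because inserting a replica changes neither the $\mathbf x$- nor the $\mathbf y$-coordinates, so the set $\Jscr_s=\{k\in\Jscr : x_k(s)=1\}$ (hence the identity of the greedy job referenced by the lemma) is unchanged along the entire chain of swaps, and i.i.d.\ service times let us couple each optimal replica with its paired greedy replica so that the two states are literally indistinguishable in their future evolution. Apart from this bookkeeping, the argument is just the proof of Theorem \ref{theorem:bound} specialized by setting $E[\sigma_{\max}]/E[\sigma_g]$ to $0$ and $E[\sigma_i]/E[\sigma_g]$ to $1$.
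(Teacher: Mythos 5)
Your proposal follows the paper's proof essentially step for step: the same induction on the number of unstarted jobs, the same reduction of the $\pi_t^*(s)\neq\pi_t^g(s)$ case to the chain of inequalities in the proof of Theorem~\ref{theorem:bound}, and the same substitution of Lemma~\ref{lemma:scaled_revenue_identical} for Lemma~\ref{lemma:scaled_revenue} to eliminate the $E[\sigma_{\max}]/E[\sigma_g]$ penalty while $E[\sigma_i]/E[\sigma_g]$ collapses to $1$. Your additional remark in step~(iii) about swapping replicas machine by machine is exactly the observation the paper itself makes (in the proof of Theorem~\ref{theorem:bound}) to justify applying the single-machine lemma coordinate-wise, so the argument is correct and identical in spirit and detail to the paper's.
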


\begin{proof}
Under this scenario, Lemma \ref{lemma:scaled_revenue} can be replaced by Lemma \ref{lemma:scaled_revenue_identical} in the proof of Theorem \ref{theorem:bound}.  Hence, $E[J_t^*(\hat{S}(s,A_*))] = E[J_t^*(\hat{S}(s,A_g))]$ since the distribution of completion times is identical, the amount of time a processor is busy is independent of which job it is processing.  Instead of replicating the entire proof here, we examine how (\ref{eq:th1}), (\ref{eq:th2}), and (\ref{eq:th3}) change.

The only difference for (\ref{eq:th1}) is that $E[\sigma_j] = E[\sigma_i]$ for $i,j$ which allows for a slight simplification.
\begin{eqnarray}
J_t^*(s) & = & \sum_{j\in\Jscr_*}E[w_j(t+\sigma_j)] + E[J_t^*(\tilde{S}(s,A_*))] \nonumber \\  & \leq & \sum_{(i,g)\in (\Jscr_*,\Jscr_g)}\frac{E[\sigma_i]}{E[\sigma_g]}E[w_g(t+\sigma_g)] + E[J_t^*(\tilde{S}(s,A_*))]\nonumber \\
& \leq & \sum_{g\in\Jscr_g}E[w_g(t+\sigma_g)] +  + E[J_t^*(\hat{S}(s,A_*))]
\end{eqnarray}
Now, with improvement to Lemma \ref{lemma:scaled_revenue} in Lemma \ref{lemma:scaled_revenue_identical}, (\ref{eq:th2}) is reduced significantly
\begin{eqnarray}
\sum_{g \in \Jscr_g}E[w_g(t+\sigma_g)] + E[J_t^*(\hat{S}(s,A_*))]
  =  \sum_{g\in\Jscr_g}E[w_g(t+\sigma_g)] +  E[J_t^*(\hat{S}(s,A_g))]
\end{eqnarray}
Finally, utilizing Lemma \ref{lemma:vm_rewards} and completing/generating rewards for the greedy jobs gives:
\begin{eqnarray}
\sum_{g\in\Jscr_g}E[w_g(t+\sigma_g)] +  E[J_t^*(\hat{S}(s,A_g))]
& \leq & 2\sum_{g\in\Jscr_g}E[w_g(t+\sigma_g)] +  E[J_t^*(\tilde{S}(s,A_g))]    \\
& \leq & 2\sum_{g\in\Jscr_g}E[w_g(t+\sigma_g)] +  2E[J_g^*(\tilde{S}(s,A_g))]   \nonumber \\
& =& 2J_t^g(s)\nonumber
\end{eqnarray}
\end{proof}

In the case of i.i.d. service times, the greedy policy corresponds to scheduling the job with the highest expected rewards over their identical completion times.  While this seems to be an intuitive policy, the following example shows what can go wrong.

\begin{example}\label{ex:iid}
Consider the case with $2$ jobs and $1$ machine ($J = 2$ and $N = 1$).  We begin at $t=0$. Assume that neither job has begun processing so that $x_1 = x_2 = 1$ and $y_1 = y_2 = \emptyset$. The service times for job $1$ and $2$ are both deterministic and equal to $1$.  The reward functions are:
\[
\begin{split}
{\rm For} \ j=1:& \
w_j(t) =
\begin{cases}
1-\epsilon, & \text{$t = 1$}\\
0, & \text{$t > 1$}
\end{cases}
\\
{\rm For} \ j=2:& \
w_j(t) =
\begin{cases}
1, & \text{$\forall t$}
\end{cases}
\end{split}
\]
for $\epsilon > 0$. So that the completion of job $1$ only results in revenue if it is completed in the first time slot, but job $2$ results in the same revenue, regardless of which time slot it is completed in.  Therefore, the reward rates are:
\begin{eqnarray}
\frac{E[w_1(t+\sigma_1)]}{E[\sigma_1]} & =&
\begin{cases}
1-\epsilon, & \text{$t = 1$}\\
0, & \text{$t > 1$}
\end{cases}
\nonumber\\
\frac{E[w_2(t+\sigma_2)]}{E[\sigma_2]} & =&
\begin{cases}
1, & \text{$\forall t$}
\end{cases} \nonumber
\end{eqnarray}
Clearly, the greedy policy is to schedule job $2$ and then job $1$ since the reward rate for job $2$ is greater than that for job $1$ ($1+\epsilon > 1$). However, when job $1$ completes at $t = 2$, it generates no reward since $w_1(2) = 0$.  This results in  reward $1$.  On the other hand, the optimal policy realizes the reward of job $1$ is degrading and schedules it first and schedules job $2$ second.   This results in reward $2-\epsilon$. We thus see that $J_t^*(s) = (2-\eps) J_t^{g}(s)$ here.
\end{example}
In light of the example just shown, the bound in Theorem \ref{theorem:bound_up} is \emph{tight}.

\subsection{Slowly Decaying Rewards}\label{ssec:asym_opt}
We have proven a worse case bound for arbitrary decaying rewards.  If the time-scale of decay is very  long compared to the time-scale of job completion times, then the rewards would be nearly constant during the processing time of a job.  In particular, as the decay goes to zero over the time-scale of job completion times, the performance of the greedy heuristic approaches the performance of the optimal policy.

We will now formally define the time-scale of decay.  Consider a difference equation specification for the time-scale of decay.  Let
$$\delta = \max_{t,k,m} E[w_k(t) - w_k(t + \sigma_m)] \geq 0.$$
We will show that as $\delta \rightarrow 0$, $J_t^g(s) \rightarrow J_t^*(s)$. To do this, we must start with a few preliminary results.

The first is, as $\delta \rightarrow 0$, rewards become invariant to the completion time.  Rewards are generated upon the completion of each job.  However, as $\delta \rightarrow 0$, the rewards generated at the completion time of a job is nearly the rewards that would have been generated had the job had $0$ processing time.
\begin{lemma}(Time-Invariant Rewards)\label{lemma:tinvariant}
For any jobs $i,j$ and time slot $t$, as the time-scale of decay, $\delta$, approaches $0$, the reward generated for completing job $j$ is invariant to shifts in time by the service time of job $i$, $\sigma_i$. In particular, $$\lim_{\delta \rightarrow 0} E[w_j(t + \sigma_i)] =  w_j(t)$$
\end{lemma}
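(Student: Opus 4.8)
The plan is to show that the gap $E[w_j(t)] - E[w_j(t+\sigma_i)] = E[w_j(t) - w_j(t+\sigma_i)]$ is squeezed to zero by the definition of $\delta$, and that $w_j(t)$ is a deterministic constant so the expectation drops out. First I would observe that by Assumption \ref{as:reward} the reward function $w_j$ is non-increasing, so for every realization of $\sigma_i \geq 1$ we have $0 \leq w_j(t) - w_j(t+\sigma_i)$; hence $E[w_j(t)] - E[w_j(t+\sigma_i)] \geq 0$ and it suffices to bound this quantity from above.

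Next I would invoke the definition $\delta = \max_{t,k,m} E[w_k(t) - w_k(t+\sigma_m)]$. Taking $k = j$ and $m = i$ in that maximum immediately gives
\[
0 \;\leq\; E[w_j(t) - w_j(t+\sigma_i)] \;\leq\; \delta .
\]
Since $w_j(t)$ does not depend on the random variable $\sigma_i$, we have $E[w_j(t)] = w_j(t)$, so the display above reads $0 \leq w_j(t) - E[w_j(t+\sigma_i)] \leq \delta$, i.e. $|w_j(t) - E[w_j(t+\sigma_i)]| \leq \delta$. Letting $\delta \to 0$ forces $E[w_j(t+\sigma_i)] \to w_j(t)$, which is exactly the claim. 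I would note that this requires $P(\sigma_i < \infty) = 1$ (Assumption \ref{as:sigma}) so that $w_j(t+\sigma_i)$ is well-defined almost surely and its expectation exists; boundedness of $w_j$ below by $0$ and above by $w_j(t)$ on the relevant range makes the expectation finite, so no integrability subtlety arises.

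Honestly there is no real obstacle here — the lemma is essentially a restatement of the definition of $\delta$ combined with monotonicity. The only thing to be a little careful about is the logical direction of the limit: $\delta$ is a fixed parameter of a given problem instance, so "$\delta \to 0$" should be read as ranging over a family of instances whose reward functions flatten out while the service-time distributions stay fixed; along any such family the bound $|w_j(t) - E[w_j(t+\sigma_i)]| \leq \delta$ holds uniformly in $i,j,t$, which is precisely what is needed for the downstream asymptotic-optimality argument in Section \ref{ssec:asym_opt}.
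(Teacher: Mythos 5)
Your argument is essentially identical to the paper's: both bound $\lvert w_j(t) - E[w_j(t+\sigma_i)]\rvert$ by the maximum over $(\tau,k,m)$, identify that maximum with $\delta$, and let $\delta\to 0$. You are slightly more explicit in spelling out why the absolute value can be dropped (monotonicity of $w_j$ from Assumption \ref{as:reward}) and why $E[w_j(t)] = w_j(t)$, steps the paper leaves implicit, but there is no substantive difference in route.
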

\begin{proof}
For any job indices $i,j$ and time slot $t$:
\begin{eqnarray}
\big | E[w_j(t + \sigma_i)] -  w_j(t) \big | & \leq &
 \max_{\tau,k,m} \big | E[w_k(\tau + \sigma_m)] -  w_k(\tau) \big | \nonumber \\
& = & \delta
\end{eqnarray}
which implies that $ \big | E[w_j(t + \sigma_i)] -  w_j(t) \big |  \rightarrow 0$ as $\delta \rightarrow 0$.
\end{proof}

Because rewards are nearly constant over the time-scale of job completions, starting a job $\sigma_j$ time slots later does not significantly reduce the aggregate reward accrued.  The following lemma is similar to Lemma \ref{lemma:scaled_revenue} for slowly decaying reward functions.  Define $\hat{S}(s,A)$ as in Section \ref{ssec:bound}, so that $\hat{S}(s,A)$ is the state where jobs are processed on the machines given by $A$, but they are not removed and no reward is generated for this initial processing.  These replica jobs occupy the machines, making them unable to process other jobs in the meantime.  However, they do not generate reward.  In notation, $x(\hat{S}(s,A))_j = x(s)_j$ and $y(\hat{S}(s,A))_j = y(s)_j$ for all $j$, while $z(\hat{S}(s,A))_n = j$ for all $(j,n) \in A$ and $z(\hat{S}(s,A))_n = z(s)_n$ otherwise.
\begin{lemma}(Delayed Machine)\label{lemma:delayedmach}
Let $\hat{s} = \hat{S}(s,A)$ denote the resulting state if machines in $A$ are occupied, but all the jobs have the same (un)processed state as in state $s$.  Then, starting in any state $s$ and given action $A$, the difference in optimal reward-to-go generated in states $s$ and $\hat{s}$  goes to $0$ as the time-scale of decay, $\delta$, goes to $0$, i.e.
$$\big | J_t^*(s) - E[J_t^*(\hat{S}(s,A) ]\big | \rightarrow 0\textrm{ as }\delta \rightarrow 0$$
\end{lemma}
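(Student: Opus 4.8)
The asserted bound is two‑sided, and one inequality is immediate: $\hat S(s,A)$ differs from $s$ only in that the machines touched by $A$ are tied up by reward‑less replicas, so any policy for $\hat S(s,A)$ can be reproduced inside the $s$‑system, which simulates internally, on each such machine $n$ with $(j,n)\in A$, an independent replica service time with the law of $\sigma_j$, idles $n$ while that replica runs, and otherwise copies the $\hat S(s,A)$‑optimal decisions. Coupling the simulated replicas with the real ones and all genuine service times, the two systems realize identical completion times and hence identical reward, giving $J_t^*(s)\ge E[J_t^*(\hat S(s,A))]$. It therefore remains only to produce the matching upper bound $J_t^*(s)\le E[J_t^*(\hat S(s,A))]+J\delta$, which suffices because $J$ (the number of jobs) is fixed while $\delta\to 0$.

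For this I would fix the optimal policy $\pi^*(s)$, couple all service times, and set $M_A=\{n:(j,n)\in A\text{ for some }j\}$, with $d_n$ the (random) duration of the replica occupying $n\in M_A$; note $d_n$ has the law of the service time of the job $A$ assigns to $n$ and is independent of all genuine job service times. In the spirit of Lemma~\ref{lemma:scaled_revenue}, define a policy $\tilde\pi$ for $\hat S(s,A)$ that runs a shadow copy of the $s$‑system under $\pi^*(s)$ and, whenever the shadow starts a job $j$ on machine $n$ (at shadow time $t_j$), starts $j$ on $n$ in the real system at the first instant $\ge t_j$ at which $n$ is idle there. A per‑machine induction along the sequence $j_1,j_2,\dots$ of jobs the shadow assigns to a fixed machine $n$ then shows that every job $j$ completes under $\tilde\pi$ no later than $T_j^*(s)+d_{n(j)}$, where $T_j^*(s)$ is its completion time under $\pi^*(s)$, $n(j)$ is the machine it runs on, and $d_{n(j)}:=0$ if $n(j)\notin M_A$: the first job $j_1$ on $n$ starts in the real system at $\max(t_{j_1},t+d_n)\le t_{j_1}+d_n$ since $t_{j_1}\ge t$, and each subsequent job $j_{k+1}$ starts at $\max\{t_{j_{k+1}},C_k\}\le t_{j_{k+1}}+d_n$, where $C_k$ is the real completion time of $j_k$, because $C_k$ is at most the shadow completion time of $j_k$ plus $d_n$ (real and shadow run‑times agree) and the shadow completion time of $j_k$ does not exceed $t_{j_{k+1}}$; this argument goes through even if the shadow leaves $n$ idle between consecutive jobs. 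In particular the delay never exceeds $\sigma_{\max}$, and jobs already in processing at $s$ complete at the same time in both systems.

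Since each $w_j$ is non‑increasing (Assumption~\ref{as:reward}), on every sample path
\[
J_t^*(s)-J_t^{\tilde\pi}(\hat S(s,A))\ \le\ \sum_{j\in\Jscr}\Big(w_j\big(T_j^*(s)\big)-w_j\big(T_j^*(s)+d_{n(j)}\big)\Big).
\]
Taking expectations and conditioning on the service times of the genuine jobs (which determine $T_j^*(s)$ and the identity of the job whose law $d_{n(j)}$ follows, while $d_{n(j)}$ remains independent of all of this), each summand is bounded by $\max_{t,k,m}E[w_k(t)-w_k(t+\sigma_m)]=\delta$ by the definition of the time‑scale of decay. Hence $J_t^*(s)-E[J_t^{\tilde\pi}(\hat S(s,A))]\le J\delta$, and since $\tilde\pi$ is feasible for $\hat S(s,A)$ we obtain $J_t^*(s)-E[J_t^*(\hat S(s,A))]\le J\delta$. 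Combined with the first paragraph this yields $0\le J_t^*(s)-E[J_t^*(\hat S(s,A))]\le J\delta\to 0$, which is the claim.

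The step I expect to be delicate is the per‑job estimate $E\big[w_j(T_j^*(s))-w_j(T_j^*(s)+d_{n(j)})\big]\le\delta$: it relies on the replica durations being independent of the policy‑dependent random completion times $T_j^*(s)$, so that conditioning reduces the random time argument of $w_j$ to the deterministic $t$ in the definition of $\delta$. The cascading‑delay bound of the second paragraph is intuitively clear but must be checked carefully, in particular that the delay incurred on a given machine never compounds beyond that machine's single replica duration.
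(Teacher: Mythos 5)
Your proposal is correct, and it takes a genuinely different (and in one respect, tighter) route than the paper. The paper's upper-bound step defines $\sigma_k^* = \max_{(k,n)\in A}\sigma_k$, builds a policy for $\hat S(s,A)$ that idles \emph{all} machines until $t+\sigma_k^*$ and then replays $\pi^*(s)$ shifted uniformly by $\sigma_k^*$, so that $T_j^{\tilde\pi}=T_j^*+\sigma_k^*$ for every $j$, and then invokes Lemma~\ref{lemma:tinvariant} termwise to get $J\delta$. You instead run a shadow $s$-system and replay each job on its shadow-assigned machine as soon as that machine is free, and your per-machine induction shows the delay of a job on machine $n\in M_A$ never exceeds the single replica duration $d_n$ (it does not compound across jobs queued on the same machine) and is zero off $M_A$. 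This buys something concrete: when $|A|>1$, $\sigma_k^*$ is a maximum of several independent replica durations, and Lemma~\ref{lemma:tinvariant} as stated only controls a shift by one $\sigma_m$; the paper's proof would actually need a small patch (e.g.\ telescoping $\sigma_k^*\le\sum_{(k,n)\in A}\sigma_k$, giving $J|A|\delta$) to be airtight, whereas your per-job delay is a single replica duration $d_{n(j)}$, so the $\delta$-definition applies verbatim and you legitimately obtain $J\delta$. Your care about independence — conditioning on the genuine service times fixes $T_j^*(s)$ and the identity of the replica on $n(j)$, while $d_{n(j)}$ remains independent — is exactly the point that makes the per-job estimate valid, and the easy direction $J_t^*(s)\ge E[J_t^*(\hat S(s,A))]$ matches the paper's first paragraph.
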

\begin{proof}
To begin, note that  $J_t^*(s) \geq E[J_t^*(\hat{S}(s,A))]$.  To see this, we couple the job completion times.  Let $\tilde{\pi}$ denote a policy starting from state $s$, but mimicking the optimal policy starting from state $\hat{S}(s,A) = \hat{s}$, $\pi^*(\hat{s})$.  Therefore, under the $\tilde{\pi}$ policy, machine $n_k$ will idle for $\sigma_k$ time slots before proceeding if $(k,n_k) \in A$. The $s$-system simply delays processing any new jobs until the replica jobs in the $\hat{s}$-system are completed.  In this case, the completion time for jobs will be identical under the $\tilde{\pi}$ and $\pi^*(\hat{s})$ policies.  Hence $J_t^*(\hat{s}) = J_t^{\tilde{\pi}}(s) \leq J_t^*(s)$, by the optimality of $J_t^*(s)$.

Now to show the convergence result, couple the job completion times under the $s$ and $\hat{s}$-systems.  Let $\sigma_k^* = \max_{(k,n) \in A} \sigma_k$ be the maximal service time for jobs in $A$. Consider a policy $\tilde{\pi}$ for the $\hat{s}$-system which idles for $\sigma^*_k$ time slots and begins processing new jobs at time $t' = t+\sigma_k^*$, but assumes that $t' = t$.  Therefore, $\tilde{\pi}$ coincides precisely with $\pi^*(s)$ shifted in time by $\sigma_k^*$.  In other words, the $\tilde{\pi}$ policy waits until $t'$ at which point all replica jobs are completed and then begins processing new jobs as if no time has passed and $t' = t$.

For the $s$-system, let $T_j^*$ be the completion time of job $j$ under the optimal policy $\pi^*(s)$.  Then $T_j^{\tilde{\pi}} = T^*_j + \sigma_k^*$ is the completion time for job $j$ under the $\tilde{\pi}$ policy. Now, given some $\epsilon > 0$
\begin{eqnarray}
\big | J_t^*(s) - E[J_t^*(\hat{s})] \big | & \leq & \big |J_t^*(s) - E[J_t^{\hat{\pi}}(\hat{s}) ]\big | \nonumber \\
& = &   \Big |E\big [\sum_{j}w_j(T^*_j)\big ] - E\big [\sum_j w_j(T^{\tilde{\pi}}_j)\big ] \Big | \nonumber \\
& = &   \Big |\sum_{j}E\big [w_j(T^*_j) - w_j(T^*_j + \sigma_k^*)\big ]  \Big | \nonumber \\
& = &   \Big |\sum_{j}E\Big[w_j(T^*_j) - E\big[w_j(T^*_j + \sigma_k^*)\big ]\Big]  \Big | \nonumber \\
& \leq & J\delta < \epsilon
\end{eqnarray}
The inequality comes from Lemma \ref{lemma:tinvariant} and because $\delta \rightarrow 0$ there exists $\delta < \epsilon/J$.
\end{proof}

Now, we are in position to prove that the performance of the greedy policy approaches the performance of the optimal policy when the decay of rewards is slow compared to the job completion time.
\begin{theorem}(Slowly Decaying Rewards)\label{theorem:slowdecay}
For any state $s\in \Sscr$, as the time-scale of decay goes to $0$, i.e $\delta \rightarrow 0$, the performance of the greedy policy approaches that of the optimal policy.
$$J_t^g(s) \rightarrow J_t^*(s).$$
\end{theorem}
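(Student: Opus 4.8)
The plan is to prove the sharper quantitative bound $0\le J_t^*(s)-J_t^g(s)\le C\delta$, with $C$ a constant depending only on the number $J$ of jobs, and then let $\delta\to0$. The lower inequality is immediate since $\pi^g\in\Pi$, so all the work is in the upper bound. I would first record that \emph{both} policies are non-idling: the optimal one by Lemma~\ref{lemma:freemach}, and the greedy one because appending any admissible job--processor pair to a matching can only raise its greedy score (the added term $E[w_j(t+\sigma_j)]/E[\sigma_j]$ is nonnegative), so $\pi^g_t(s)$ may always be chosen to be a maximal matching of size $\min\{K,F\}$.

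The heart of the argument is the following estimate for an \emph{arbitrary} non-idling policy $\pi$ started from $s$ at time $t$: every not-yet-started job $j$ is completed within at most $J$ consecutive ``service-time blocks'' of $t$, i.e. $T_j^\pi\le t+\sigma_{i_1}+\cdots+\sigma_{i_r}$ with $r\le J$ (plus possibly the residual of one job already in progress, contributing a term of the same order), because the machine that eventually serves $j$ is never idle and clears at most $J-1$ other jobs before picking $j$ up. Telescoping $w_j(t)-w_j(T_j^\pi)$ over these blocks, conditioning successively on the service times already revealed (the policy is non-anticipating, so each next block length is independent of those), and applying the definition $\delta=\max_{t,k,m}E[w_k(t)-w_k(t+\sigma_m)]$ to each of the at most $J$ telescoped terms, gives $E[w_j(t)]-E[w_j(T_j^\pi)]\le J\delta$. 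Jobs already in mid-processing at $s$ complete at a time independent of the policy and so contribute a fixed amount to $J_t^\pi(s)$; for the at most $J$ not-yet-started jobs, combining the last estimate with $w_j(T_j^\pi)\le w_j(t)$ (Assumption~\ref{as:reward}) shows their total contribution lies within $J^2\delta$ of the policy-independent quantity $\sum_{j:\,y_j(s)=\emptyset}w_j(t)$. Hence $J_t^\pi(s)$ lies in an interval of width $J^2\delta$ that does not depend on $\pi$, and taking $\pi=\pi^*$ and $\pi=\pi^g$ yields $J_t^*(s)-J_t^g(s)\le J^2\delta\to0$.

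This can equivalently be packaged as an induction on the number of unprocessed jobs mirroring the proof of Theorem~\ref{theorem:bound}: the base case is trivial, the case $\pi_t^*(s)=\pi_t^g(s)$ closes by the induction hypothesis, and when the first moves differ one compares $J_t^*(s)=\sum_{j\in\Jscr_*}E[w_j(t+\sigma_j)]+E[J_t^*(\tilde S(s,A_*))]$ with the value $\sum_{g\in\Jscr_g}E[w_g(t+\sigma_g)]+E[J_t^*(\tilde S(s,A_g))]$ of playing the greedy move and continuing optimally, using Lemma~\ref{lemma:tinvariant} to replace each first-step reward $E[w_j(t+\sigma_j)]$ by the job-independent $w_j(t)$ up to $\delta$, Lemma~\ref{lemma:delayedmach} to swap the machine-occupying jobs at cost $O(\delta)$, and Lemmas~\ref{lemma:mono_job} and~\ref{lemma:vm_rewards} exactly as in Section~\ref{ssec:bound}; the induction hypothesis at $\tilde S(s,A_g)$ then finishes. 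The one genuinely new difficulty --- and the reason one cannot copy Theorem~\ref{theorem:bound} verbatim --- is that greedy orders jobs by the \emph{rate} $E[w_j(t+\sigma_j)]/E[\sigma_j]$ rather than by the reward $E[w_j(t+\sigma_j)]$, so sending $\delta\to0$ makes rewards time-invariant but does \emph{not} make the greedy and optimal first moves coincide; the discrepancy must be absorbed into the $O(\delta)$ error rather than into a multiplicative constant, and it is precisely the non-idling fact above --- whose local form is Lemma~\ref{lemma:delayedmach} --- namely that under any non-idling policy every job is ultimately completed with nearly its full value so the identity of the first job scheduled scarcely matters, that makes this possible.
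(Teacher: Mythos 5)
Your main argument is correct, and it takes a genuinely different --- and arguably cleaner --- route than the paper. The paper proves this theorem by induction on the number of unstarted jobs, applying Lemma~\ref{lemma:delayedmach} at each step to pay an $O(\delta)$ cost for occupying machines with the greedy jobs as replicas, then Lemma~\ref{lemma:vm_rewards} to release them, then the inductive hypothesis; the $\epsilon/2$-plus-$\epsilon/2$ bookkeeping absorbs these costs. Your non-inductive argument instead shows directly that for \emph{every} non-idling policy $\pi$ the expected reward of each not-yet-started job satisfies $w_j(t)-E[w_j(T_j^\pi)]\le O(J)\delta$ --- because the serving machine processes at most $J-1$ other jobs plus a residual before $j$, each of whose expected contribution to the delay, by the non-anticipating nature of the policy and the independence of service times, is controlled by $\delta$ after conditioning --- and hence $J_t^\pi(s)$ lies in an interval of width $O(J^2)\delta$ independent of $\pi$. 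This subsumes both Lemma~\ref{lemma:tinvariant} and Lemma~\ref{lemma:delayedmach} and yields an explicit linear-in-$\delta$ rate, whereas the paper's $\epsilon$-$\delta$ induction does not cleanly expose a rate (the required $\delta$ at the base of the induction depends on the depth). The one care point in your telescoping --- that the partial sums $t+\sigma_{i_1}+\cdots+\sigma_{i_{k-1}}$ are measurable with respect to the information available when $i_k$ is scheduled, so that $\sigma_{i_k}$ is conditionally independent of them and the definition of $\delta$ applies term by term --- you do flag, and it is correct. Your second paragraph, offering an inductive repackaging, drifts slightly from what the paper actually does (the paper's induction never invokes $A_*$, $\Jscr_*$, or Lemma~\ref{lemma:mono_job} in this proof; it goes straight from $s$ to $\hat S(s,A_g)$ via Lemma~\ref{lemma:delayedmach}), but your closing observation --- that $\delta\to0$ does \emph{not} make the greedy and optimal first moves coincide, since greedy ranks by the rate $E[w_j(t+\sigma_j)]/E[\sigma_j]$ rather than the reward, so the discrepancy must be absorbed additively rather than multiplicatively --- is exactly the right diagnosis of why this proof cannot be read off from Theorem~\ref{theorem:bound}.
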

\begin{proof}
The proof is by induction on the number of jobs remaining to begin processing.  Clearly, when only one job remains the greedy and optimal policies coincide.  Now we assume it is true for $K-1$ jobs remaining and show that it is true for $K$ jobs.

Denote by $\Jscr_*$ and $\Jscr_g$ the set of jobs processed by the optimal and greedy policies in state $s$.  Recall that, by Lemma \ref{lemma:freemach}, $|\Jscr_*| = |\Jscr_g|$. Let $A_*$ and $A_g$ denote the optimal and greedy scheduling policy, respectively. As before, we define $\tilde{S}(s,A)$ which is the next state given we start in state $s$ and take action $A$ and $\hat{S}(s,A)$ which is the state with machines in $A$ occupied by replica jobs which generate $0$ reward.

Suppose we are given $\epsilon > 0$.  Define $\delta_{\epsilon,1}$ such that for all $\delta < \delta_{\epsilon,1}$, $\big | J_t^*(s|\sigma_j) - J_t^*(\hat{S}(s,A)|\sigma_j) \big | < \epsilon/2$; this is possible due to Lemma \ref{lemma:delayedmach}.    Define $\delta_{\epsilon,2}$ such that for all $\delta < \delta_{\epsilon,2}$, $\big | J_t^*(s') - J_t^g(s') \big | < \epsilon/2$ for any $s'$ with $K-1$ jobs remaining; this is possible due to our inductive hypothesis. Then let $\delta_{\epsilon} = \min\{\delta_{\epsilon,1},\delta_{\epsilon,2}\}$.  For any $\delta < \delta_{\epsilon}$:
\begin{eqnarray}
J_t^*(s) &\leq& E[J_t^*(\hat{S}(s,A_g))] +\epsilon/2 \nonumber \\
&\leq&\sum_{j\in\Jscr_g}E[w_j(t+\sigma_j)] + E[J_t^*(\tilde{S}(s,A_g))] +\epsilon/2 \nonumber \\
&\leq&\sum_{j\in\Jscr_g}E[w_j(t+\sigma_j)] + E[J_t^g(\tilde{S}(s,A_g))] +\epsilon \nonumber \\
& = & J_t^g(s) + \epsilon \nonumber
\end{eqnarray}
The first inequality is due to Lemma \ref{lemma:delayedmach}, for state $s$ and action given by $A_g$.  The second inequality is by Lemma \ref{lemma:vm_rewards} for removing the greedy jobs.  The third inequality is by the inductive hypothesis.

By the optimality of $J_t^*$, $\big | J_t^*(s) - J_t^g(s) \big | = J_t^*(s) - J_t^g(s)$.  So for $\delta < \delta_{\epsilon}$,  $\big | J_t^*(s) - J_t^g(s) \big | < \epsilon$, which proves our claim.
\end{proof}

This result is intuitive because as the time-scale of decay becomes negligible to the time-scale of job completion times, rewards can be viewed as essentially constant.  As such, it does not matter which order jobs are completed, since all will be completed.  Hence, any policy, and certainly the greedy policy, is nearly optimal.  However, the convergence rate to optimality will vary across policies.

\section{Performance Evaluation}\label{sec:perfeval}
In the previous sections, we have shown performance guarantees for a greedy policy when scheduling jobs with decaying rewards.  In light of Example \ref{ex:suboptimality} and Theorem \ref{theorem:bound}, the loss in performance due to use of the greedy policy can be at least $\Delta +1$ but  can do no worse that $\Delta + 2$.  In this section, we show that, in practice, the greedy performance is likely to be  much better.

\begin{figure}[hbt]
\begin{center}
\includegraphics[scale=.5]{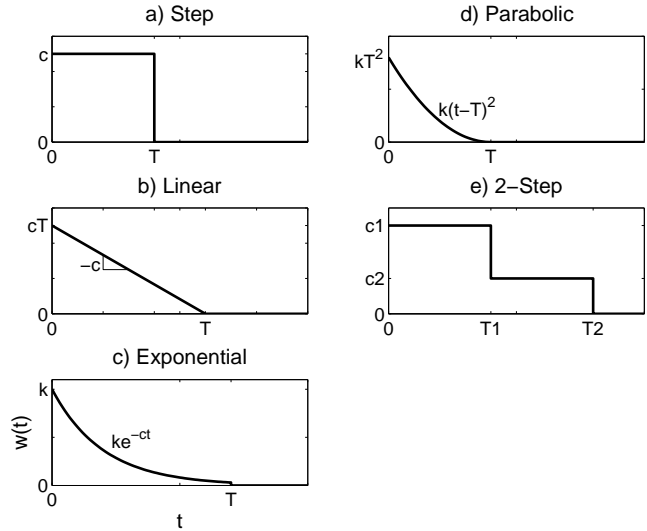}
 \caption{Different types of decaying functions.}\label{fig:decayplots}
\end{center}
\end{figure}

In order to enable computation of an optimal policy we assume that the number of jobs is finite and small
($2$-$10$). Even with a finite number of jobs, $|\Sscr|$ grows exponentially fast in several problem parameters which forces us to limit the size of the problem instances we consider. In particular, we consider problems with a single machine, $M=1$, and jobs with finite deadlines less than $100$.  That is no reward is accrued after $t = 100$. We assume job completion times are Geometric with $p_j$ evenly distributed between $p_{min}$ and $p_{max} = .9$. Since there is no closed form distribution for $\sigma_{\max}$, see Appendix \ref{ap:geo} for how to find an upper-bound to $E[\sigma_{\max}]$ and, subsequently, an upper-bound to $\Delta$. We consider a number of decaying reward functions depicted in Fig. \ref{fig:decayplots}.  The constants defining each reward function are drawn uniformly; all experimental results are averaged over $100$ different realizations of these constants, with $1000$ experiments for each such set.

In Table \ref{table:perf}, we summarize the performance of the greedy policy for the reward functions shown in Fig. \ref{fig:decayplots}.  In this case, $p_{\min} = .1$ and $p_{\max} = .9$; $\min_j E[\sigma_j] = \frac{1}{.9} = 1.11$ and $E[\max_j \sigma_j] < 27.3$, therefore, $2+\Delta< 26.6$.  We can see that while the optimal policy achieves larger reward than the greedy policy, the gains are within a factor of $1.20$--much better than the guarantee provided by Theorem \ref{theorem:bound}.  Because we have finite deadlines for each job, there exists some $T_{max}$ such that for all $j$, $w_j(t) = 0$ for all $t > T_{max}$.  Therefore, the reward function with exponential decay is slightly modified from the standard notion of exponential decay where $w_j(t) \rightarrow 0$, but $w_j(t) > 0$ for any $t < \infty$.  Hence, the greedy policy is \emph{not} optimal for this exponential decay with finite deadline.

\begin{table}[htbp]
\begin{center}
\begin{tabular}{|l|c|c|c|}
  \hline
   &\multicolumn{3}{c|}{$J_t^*/J_t^g$} \\
   \cline{2-4}
  Type & $J = 2$ & $J = 5$ & $J = 8$ \\
  \hline
  \hline
  Step & 1.0065 & 1.0931 & 1.1287 \\
  \hline
  Linear & 1.0133 & 1.0576 & 1.1289 \\
  \hline
  Exponential & 1.0609 & 1.0433 & 1.0590 \\
  \hline
  Parabolic & 1.0265 & 1.0382 & 1.0667 \\
  \hline
  2-step & 1.0218 & 1.1007 & 1.1520 \\
  \hline
\end{tabular}
\end{center}
\caption{Performance of Greedy policy versus Optimal Policy for different types of decaying reward functions for $J$ jobs.} \label{table:perf}
\end{table}
\begin{figure}[htb]
\begin{center}
\psfrag{N}[][][1.1]{$J$}
\psfrag{JoJg}[][][1.4][-90]{
$\frac{J_t^*}{J_t^g}$
}
\includegraphics[scale=.5]{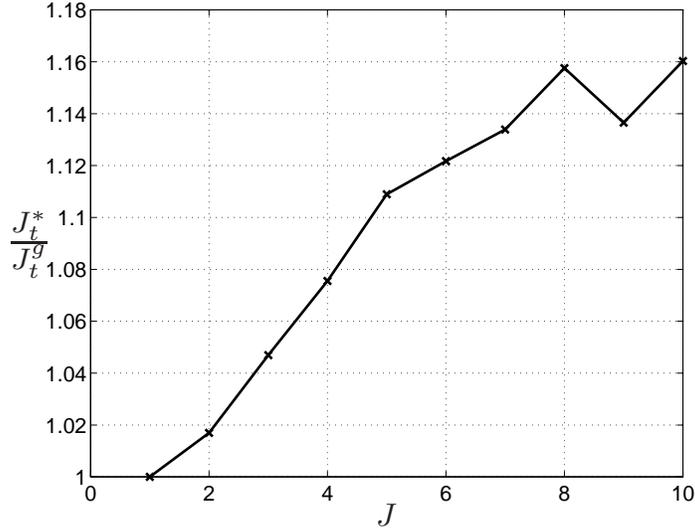}
 \caption{Performance loss ($\frac{J_t^*}{J_t^g}$) as the number of jobs ($J$) increases.}\label{fig:runN}
\end{center}
\end{figure}

It is interesting to note that the performance of the greedy policy seems to degrade as the number of jobs increases.  We examine this more closely in Fig. \ref{fig:runN} under a step function reward function where rewards are constant until a fixed deadline as in Fig. \ref{fig:decayplots}a.  Clearly, the greedy and optimal policies coincide when there is only one job.  As the number of jobs increases, the performance of the greedy policy degrades; however, the loss in performance is much less than the bound of $2+\Delta< 26.6$ guarantees.  $2+\Delta$ is a worse-case bound and while there are degenerate cases whose performance approaches this bound; it seems that in practice, the performance of the greedy policy is likely to be much better.

\begin{figure}[htb]
\begin{center}
\psfrag{pmaxpmin}[][][1.1]{$\Delta_{UB}$}
\psfrag{JoJg}[][][1.4][-90]{$\frac{J_t^*}{J_t^g}$}
\includegraphics[scale=.5]{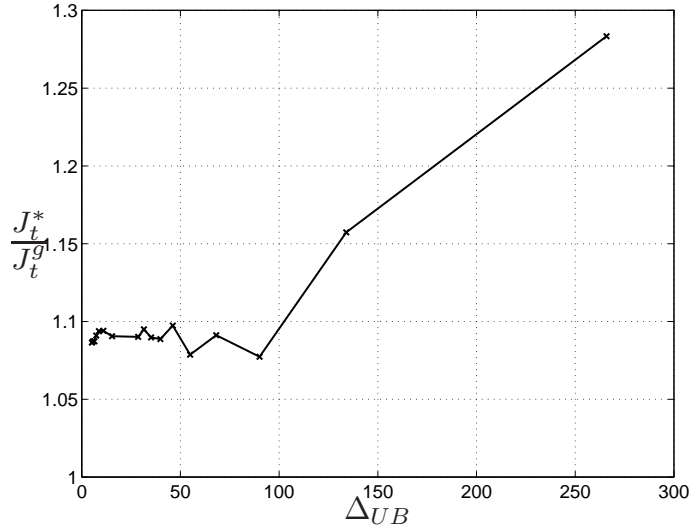}
 \caption{Performance loss ($\frac{J_t^*}{J_t^g}$) as $\Delta_{UB}$ increases.}\label{fig:runpmax}
\end{center}
\end{figure}

From Theorem \ref{theorem:bound}, the performance of the greedy policy is dependent on  $\Delta$, the ratio between the largest and smallest expected service times.  In our previous experiments, we have seen that $\frac{J_t^*}{J_t^g} \ll 2+\Delta$.  We now examine if the performance of the greedy policy will vary significantly as we change $\Delta$.  We fix $p_{\max} = .9$ and vary $p_{\min}\in [.01,.8]$; this varies the upper-bound of $\Delta$ (as calculated in Appendix \ref{ap:geo}), $\Delta_{UB} \in [5.2,265.9]$.  In Fig. \ref{fig:runpmax}, we see how the performance of the greedy policy ($\frac{J_t^*}{J_t^g}$) varies with $\Delta$.  As expected, as $\Delta$ increases, so does the loss in performance.  However, it is interesting to note that $\Delta$ must be very large before the degradation in performance is significant.   In fact, for a large range of $\Delta_{UB} \in [1,100]$, $\frac{J_t^*}{J_t^g}$ is nearly constant and the greedy policy performs within $10\%$ of optimal.  Even when $\Delta = 260$, $\frac{J_t^*}{J_t^g} < 1.3$. A loss of $30\%$ is \emph{much} better than the theory guarantees.

Depending on the system parameters, $\Delta$ can be arbitrarily large which would lead to arbitrarily large degradation in performance of the greedy policy. While we have seen via Example \ref{ex:suboptimality} that the performance of the greedy policy can be highly dependent on $\Delta$, we suspect this to be a degenerate example.  We expect that in practice, the performance of the greedy policy to be closer to performance of the optimal policy.

\section{Conclusion}\label{sec:conc}
In this paper, we have studied online stochastic non-preemptive scheduling of jobs with decaying rewards.  Arbitrary decaying reward functions enables this model to capture various distastes for delay which are more general than the standard exponential or linear decay as well as fixed (random or deterministic) deadlines.  Using stochastic Dynamic Programming techniques, we are able to show that a greedy heuristic is guaranteed to be within a factor of $\Delta + 2$ of optimal where $\Delta = \frac{E[\max_j \sigma_j]}{\min_j E[\sigma_j]}$ is the ratio of largest to shortest  service times.  While there exist degenerate scenarios where the performance loss of the proposed policy is near this worse-case bound, we expect that the performance loss to be much smaller for many practical scenarios of interest.

This is a first look at non-preemptive scheduling with arbitrary decaying rewards.  Some questions that remain are how to account for job arrivals and processor dependent service times.  When there are job arrivals, due to the non-preemptive service discipline, it may be optimal for a machine to idle in order to allow the machine to be free upon arrival of the new job.  However, doing so requires some estimate or knowledge of future jobs arrivals, which may not be available.  Also with processor dependent service times, optimal policies may also call for idling.  Consider a scenario where one machine is much faster than the rest.  Then an optimal policy may process \emph{all} jobs on this fast machine, causing the other machines to idle.  Allowing for idling policies significantly complicates the optimization problem at hand.  One option is to only consider non-idling policies and maximize reward over this class of policies.  It can be shown  via a highly degenerate example that requiring non-idling service disciplines can significantly degrade performance.  However, for many scenarios this constraint is very natural.  For instance, in service applications, such as health-care facilities, making customers (patients) wait when there are available servers (doctors) is unlikely to be tolerated.

These are just some extensions to this general model we have analyzed.  In this paper, we have considered the performance of an online scheduling algorithm for jobs with arbitrary decaying rewards.  We have shown a worse-case performance bound for this policy compared to the optimal \emph{off-line} algorithm.  While there are some rare instances when the loss in performance of the proposed greedy policy is significant, in practice, we expect the performance loss to be small.  This, along with the simplicity of this algorithm, makes it highly desirable for real world implementation.

\bibliographystyle{IEEEtran}
\bibliography{IEEEabrv,nonPR}

\begin{thebibliography}{10}
\providecommand{\url}[1]{#1}
\csname url@samestyle\endcsname
\providecommand{\newblock}{\relax}
\providecommand{\bibinfo}[2]{#2}
\providecommand{\BIBentrySTDinterwordspacing}{\spaceskip=0pt\relax}
\providecommand{\BIBentryALTinterwordstretchfactor}{4}
\providecommand{\BIBentryALTinterwordspacing}{\spaceskip=\fontdimen2\font plus
\BIBentryALTinterwordstretchfactor\fontdimen3\font minus
  \fontdimen4\font\relax}
\providecommand{\BIBforeignlanguage}[2]{{%
\expandafter\ifx\csname l@#1\endcsname\relax
\typeout{** WARNING: IEEEtran.bst: No hyphenation pattern has been}%
\typeout{** loaded for the language `#1'. Using the pattern for}%
\typeout{** the default language instead.}%
\else
\language=\csname l@#1\endcsname
\fi
#2}}
\providecommand{\BIBdecl}{\relax}
\BIBdecl

\bibitem{mcquillan}
P.~McQuillan, S.~Pilkington, A.~Allan, B.~Taylor, A.~Short, G.~Morgan,
  M.~Nielsen, D.~Barrett, and G.~Smith, ``Confidential inquiry into quality of
  care before admission to intensive care,'' \emph{British Medical Journal},
  vol. 316, pp. 1853--1858, 1998.

\bibitem{chan}
P.~S. Chan, H.~M. Krumholz, G.~Nichol, B.~K. Nallamothu, and the American Heart
  Association National Registry~of Cardiopulmonary Resuscitation~Investigators,
  ``Delayed time to defibrillation after in-hospital cardiac arrest,''
  \emph{The New England Journal of Medicine}, vol. 358, pp. 9--17, 2008.

\bibitem{luca}
G.~D. Luca, H.~Suryapranata, J.~P. Ottervanger, and E.~M. Antman, ``Time delay
  to treatment and mortality in primary angioplasty for acute myocardial
  infarction: every minute of delay counts,'' \emph{Circulation}, vol. 109, p.
  1223–1225, 2004.

\bibitem{buist}
M.~D. Buist, G.~E. Moore, S.~A. Bernard, B.~P. Waxman, J.~N. Anderson, and
  T.~V. Nguyen, ``Effects of a medical emergency team on reduction of incidence
  of and mortality from unexpected cardiac arrests in hospital: preliminary
  study,'' \emph{British Medical Journal}, vol. 324, p. 7334, 2002.

\bibitem{bellomo}
R.~Bellomo, D.~Goldsmith, S.~Uchino, J.~B. G.~K. Hart, H.~Opdam, W.~Silvester,
  L.~Doolan, and G.~Gutteridge, ``A prospective before-and-after trial of a
  medical emergency team,'' \emph{Medical Journal of Australia}, vol. 179, pp.
  283--287, 2003.

\bibitem{sharek_jama}
P.~J. Sharek, L.~Parast, K.~Leong, J.~Coombs, K.~E.~J. Sullivan, L.~R. Frankel,
  and S.~J. Roth, ``Effect of a rapid response team on hospital-wide mortality
  and code rates outside the {ICU} in a children's hospital,'' \emph{The
  Journal of the American Medical Association}, vol. 298, pp. 2267--2274, 2007.

\bibitem{poon_arch_internmed}
E.~G. Poon, T.~K. Gandhi, T.~D. Sequist, H.~J. Murff, A.~S. Karson, and D.~W.
  Bates, ```{I} wish {I} had seen this test result earlier!': Dissatisfaction
  with test result management systems in primary care,'' \emph{Archines of
  Internal Medicine}, vol. 164, pp. 2223--2228, 2004.

\bibitem{wee_icme02}
S.~Wee, W.~Tan, J.~Apostolopoulos, and M.~Etoh, ``Optimized video streaming for
  networks with varying delay,'' in \emph{Proc. {\em IEEE} ICME}, 2002, pp.
  1673--–1677.

\bibitem{gittins}
J.~C. Gittins, \emph{Multi-armed Bandit Allocation Indices}.\hskip 1em plus
  0.5em minus 0.4em\relax John Wiley \& Sons Ltd., 1989.

\bibitem{walrand}
J.~Walrand, \emph{An Introduction to Queuing Networks}.\hskip 1em plus 0.5em
  minus 0.4em\relax Prentice-Hall, Inc, 1988.

\bibitem{kim04}
J.-H. Kim and K.-Y. Chwa, ``Scheduling broadcasts with deadlines,''
  \emph{Theoretical Computer Science}, vol. 325, pp. 479--488, 2004.

\bibitem{lipton94}
R.~J. Lipton and A.~Tomkins, ``Online interval scheduling,'' in \emph{Proc.
  SODA}, 1994.

\bibitem{nahmias82}
S.~Nahmias, ``Perishable inventory theory: A review,'' \emph{Operations
  Research}, vol.~30, pp. 680--708, 1982.

\bibitem{naso07}
D.~Naso, M.~Surico, and M.~Turchiano, ``Reactive scheduling of a distributed
  network for the supply of perishable products,'' \emph{{\em IEEE}
  Transactions on Automation Science and Engineering}, vol.~4, pp. 407--423,
  2007.

\bibitem{dalal05}
A.~C. Dalal and S.~Jordan, ``Optimal scheduling in a queue with differentiated
  impatient users,'' \emph{Performance Evaluation}, vol.~59, pp. 73--84, 2005.

\bibitem{pinedo}
M.~Pinedo, \emph{Scheduling: Theory, Algorithms, and Systems}, 2nd~ed.\hskip
  1em plus 0.5em minus 0.4em\relax Prentice-Hall, Inc, 2002.

\bibitem{schuurman99}
P.~Schuurman and G.~J. Woeginger, ``Polynomial time approximation algorithms
  for machine scheduling: Ten open problems,'' \emph{Journal of Scheduling},
  vol.~2, pp. 203--213, 1999.

\bibitem{chan08}
C.~W. Chan and V.~F. Farias, ``Stochastic depletion problems: Effective myopic
  policies for a class of dynamic optimization problems,'' \emph{Mathematics of
  Operations Research}, vol.~34, no.~2, pp. 333--350, May 2009.

\bibitem{goldman00}
S.~A. Goldman, J.~Parwatikar, and S.~Suri, ``On-line scheduling with hard
  deadlines,'' \emph{Journal of Algorithms}, vol.~34, pp. 370--389, 2000.

\bibitem{VanMieghem}
J.~van Mieghem, ``Dynamic scheduling with convex delay costs: The generalized
  $c|\mu$ rule,'' \emph{The Annals of Applied Probability}, vol.~5, pp.
  809--833, 1995.

\bibitem{bertsekas}
D.~Bertsekas, \emph{Dynamic Programming and Optimal Control}, 2nd~ed.\hskip 1em
  plus 0.5em minus 0.4em\relax Athena Scientific, 2000, vol. 1 $\&$ 2.

\end{thebibliography}
\appendix
\section{Bound on $\sigma_{\max}$}\label{ap:geo}
Suppose the service time of job $j$ is Geometrically distributed with probability $p_j$. Furthermore,  $p_j$ is uniformly distributed between $[p_{\min},p_{\max}]$.

Using the fact that $\sigma_j$ is Geometrically distributed, i.e. $P(\sigma_j \leq x) = 1 - (1-p_j)^x$ gives:
\begin{eqnarray}
P(\sigma_{\max} > x) &=& 1- \prod_{j=1}^J P(\sigma_j \leq x)\nonumber\\
&=& 1 - \prod_{j=1}^J \big (1 - (1-p_j)^x \big )\nonumber\\
&\leq & 1 -  \big (1 - (1-p_{\min})^x \big )^J
\end{eqnarray}
Finding the expectation of $\sigma_{\max}$ gives:
\begin{eqnarray}\label{eqn:sigmamax}
E[\sigma_{\max}] &=&\sum_{x=0}^\infty P(\sigma_{\max} >x) \nonumber \\
&\leq&\sum_{x=0}^\infty \Big [1 -  \big (1 - (1-p_{\min})^x \big )^J\Big ]
\end{eqnarray}

We can numerically solve (\ref{eqn:sigmamax}) to get an upper-bound on $E[\sigma_{\max}]$ and hence, an upper-bound on $\Delta$.  In particular:
$$\Delta \leq \Delta_{UB} =p_{\max} \sum_{x=0}^\infty \Big [1 -  \big (1 - (1-p_{\min})^x \big )^J\Big ]$$

\end{document}